\newtheorem{theorem}{Theorem}[section]
\newtheorem{proposition}[theorem]{Proposition}
\newtheorem{lemma}[theorem]{Lemma}
\newtheorem{corollary}[theorem]{Corollary}
\newtheorem{definition}[theorem]{Definition}
\newtheorem{notation}[theorem]{Notation}
\newtheorem{example}[theorem]{Example}
\newtheorem{remark}[theorem]{Remark}
\newtheorem{conjecture}[theorem]{Conjecture}
\newtheorem{problem}[theorem]{Problem}
\newtheorem{question}[theorem]{Question}
\newcommand{\ZZ}{\mathbb{Z}}
\newcommand{\NN}{\mathbb{N}}
\newcommand {\PP}{\mathbb{P}}
\newcommand{\kk}{\mathbb{K}}
\newcommand{\cA}{\mathcal{A}}
\newcommand{\cB}{\mathcal{B}}
\newcommand{\cL}{\mathcal{L}}
\newcommand{\cM}{\mathcal{M}}
\newcommand{\cO}{\mathcal{O}}
\DeclareMathOperator{\syz}{syz}
\DeclareMathOperator{\HH}{H}
\DeclareMathOperator{\hh}{h}
\DeclareMathOperator{\Ext}{Ext}
\DeclareMathOperator{\rk}{rk}
\DeclareMathOperator{\relint}{Relint}
\begin{document}

\title[Stability and rigidness of syzygy bundles]{Syzygy bundles of non-complete linear systems: \\
stability and rigidness}
  \author[R.\ M.\ Mir\'o-Roig]{Rosa M.\ Mir\'o-Roig} 
  \address{Facultat de
  Matem\`atiques i Inform\`atica, Universitat de Barcelona, Gran Via des les
  Corts Catalanes 585, 08007 Barcelona, Spain} \email{miro@ub.edu, ORCID 0000-0003-1375-6547}
  \author[Marti Salat-Molt\'o]{Marti Salat-Molt\'o}
\address{Department de matem\`{a}tiques i Inform\`{a}tica, Universitat de Barcelona, Gran Via de les Corts Catalanes 585, 08007 Barcelona,
Spain}
\email{marti.salat@ub.edu}

\subjclass[2020]{14J60, 14D20, 13D02, 14M25}

\keywords{stability, syzygy bundles, moduli spaces}

\thanks{The first author has been partially supported by the grant PID2019-104844GB-I00. The second author has been partially supported by the grant MDM-2014-0445-18-2.}

\begin{abstract} 
Let $(X,L)$ be a polarized smooth projective variety. For any basepoint-free linear system $\cL_{V}$ with $V\subset\HH^{0}(X,\cO_{X}(L))$ we consider the syzygy bundle $M_{V}$ as the kernel of the evaluation map $V\otimes \cO_{X}\rightarrow \cO_{X}(L)$. The purpose of this article is twofold. First, we assume that $M_{V}$ is $L$-stable and prove that, in a wide family of projective varieties, it represents a smooth point $[M_{V}]$ in the corresponding moduli space $\cM$. We  compute the dimension of the irreducible component of $\cM$ passing through $[M_{V}]$ and whether it is an isolated point. It turns out that the rigidness of $[M_{V}]$ is closely related to the completeness of the linear system $\cL_{V}$. In the second part of the paper, we address a question posed by Brenner regarding the stability of $M_{V}$ when $V$ is general enough. We answer this question for a large family of polarizations of $X=\PP^{m}\times\PP^{n}$.
\end{abstract}

\maketitle

\section{Introduction}
Given a smooth projective variety $X$ and a very ample line bundle $L$ on $X$, let $V\subset\HH^{0}(X,\cO_{X}(L))$ be a subspace such that the corresponding linear system $\cL_{V}$ is basepoint-free. The projective morphism
\[
\phi_{V}:X\longrightarrow\PP(V^{\ast})
\]
is a central object of study in algebraic geometry. In particular, the syzygies of $\phi_{V}$ are encoded in the so-called syzygy bundle $M_{V}$, whose study has been of increasing interest in the last decades. Namely, we define the syzygy bundle $M_{V}$ as the kernel of the evaluation map $ev:V\otimes \cO_{X}\rightarrow \cO_{X}(L)$,
which is surjective. In particular we have the following short exact sequence
\[
0\longrightarrow M_{V}\longrightarrow V\otimes \cO_{X}\longrightarrow \cO_{X}(L)\longrightarrow 0,
\]
which describes $M_{V}$ as a vector bundle of rank $\dim V-1$.
The syzygy bundles $M_{V}$ have been studied from many perspectives in the last decades. In particular when $V=\HH^{0}(X,\cO_{X}(L))$, the linear system $\cL_{V}$ is complete and the morphism $\phi_{V}$ coincides with the embedding 
\[
\phi_{L}:X\hookrightarrow \PP(\HH^{0}(X,\cO_{X}(L))^{\ast}),
\]
given by the very ample line bundle $L$. In this case, the syzygy bundle is denoted by $M_{L}:=M_{V}$, and it is behind many geometric properties of the embedding $\phi_{L}$. For instance, the properties $(N_{p})$ in the sense of Green \cite{G}, or the stability of the pullback $\phi_{L}^{\ast}T_{\PP^{N_{L}}}$ of the tangent bundle of $\PP^{N_{L}}$, which is related to the stability of $M_{L}$ and has been studied thoroughly in the recent years \cite{CL, EL, ELM, HMP, MS, MR, T} and has led to the so-called Ein-Lazarsfeld-Mustopa Conjecture (see Conjecture \ref{Conjecture:stable}). In this work, we focus our attention  on the stability of the syzygy bundles $M_{V}$ arising from  linear systems $\cL_{V}$ which are non-necessarily complete.

In \cite{Brenner}, motivated by the theory of tight closure, the systematic study of $L$-stable syzygy bundles on $\PP^{n}$ was considered. In particular, in \cite[Question 7.8]{Brenner} Brenner asked the following question:
\begin{question}\label{Question:Brenner_intro}
Let us consider integers $n,d\geq 1$. For which integers $r$ such that $n+1\leq r\leq \binom{n+d}{d}$ there exist $r$ monomials $m_{1},\dotsc,m_{r}$ with no common factors such that the syzygy bundle $M_{V}$ corresponding to the subspace $V=\langle m_{1},\dotsc,m_{r}\rangle\subset\HH^{0}(\PP^{n},\cO_{\PP^{n}}(d))$ is semistable?
\end{question}
The case $r=\binom{n+d}{n}$ had been previously proved in \cite{Fle}, and a complete answer was given in \cite{CMM} and \cite{C}. Moreover, in \cite{CMM} the authors studied the local geometry of the moduli space $\cM$ in which an $L$-stable syzygy bundle may be represented. In \cite[Theorem 4.4]{CMM} they proved that apart from few exceptions, an $L-$stable syzygy bundle $M_{V}$ correspond to a smooth point in $\cM$ and they computed the dimension of the irreducible component containing it. From their result one can see that if $n\geq4$, then $M_{V}$ is infinitesimally rigid if and only if $r=\binom{n+d}{n}$ and the linear system is complete. As we show in Theorem \ref{Theorem:moduli}, this is not a particular feature of syzygy bundles on projective spaces, and we can generalize this fact to a large family of smooth projective varieties.

\vskip 2mm 
In this paper, we consider the analogous of Brenner's Question \ref{Question:Brenner_intro} for any smooth projective variety:
\begin{question}\label{Question:Brenner_intro_projective}\rm
Let us fix an ample line bundle $L$ on a smooth projective variety $X$ of dimension $d$. 
For which integers $\dim(X)+1\leq r\leq  \dim\HH^{0}(X,L)$,  is there a basepoint-free linear system $\cL_{V}$ associated to a subspace $V\subset\HH^{0}(X,L)$ of dimension $r$, such that the syzygy bundle $M_{V}$ is $L-$stable?
\end{question}
Since stability is an open property, we notice that positively answering Question \ref{Question:Brenner_intro_projective} for a certain integer $r$ we obtain that the syzygy bundle $M_{V}$ corresponding to a {\em general} subspace $V\subset\HH^{0}(X,\cO_{X}(L))$ of dimension $r$ is $L$-stable. Notwithstanding, the implications of Question \ref{Question:Brenner_intro_projective} go beyond this fact: it sheds new light on the geometry of certain moduli spaces of $L$-stable vector bundles on a projective variety. To be more precise, in Section \ref{Section:moduli} we consider a large family of polarized smooth projective varieties $(X,L)$ of any dimension. In this setting, we show (see Theorem \ref{Theorem:moduli}) that a positive answer to Question \ref{Question:Brenner_intro_projective} automatically yields smooth points on a suitable moduli space. Even more, we show that in this case the dimension of the irreducible component containing these points is fully described by only using the very ample line bundle $L$.
It is worthwhile to mention that this family of projective varieties include smooth complete projective toric varieties, Grassmannians, flag varieties among others. On the other hand, Theorem \ref{Theorem:moduli} works under well understood assumptions on the very ample line bundle $L$, which are actually very mild hypothesis when $\dim(X)\neq 3$. 

Motivated by these facts, we devote the second half of this paper to answer Question \ref{Question:Brenner_intro_projective} for the product of two projective spaces $X=\PP^{m}\times\PP^{n}$, which is an example of a smooth complete projective toric variety. In this case, we can use the Cox ring of $X$ which is the standard-bigraded polynomial ring $\kk[x_{0},\dotsc,x_{m},y_{0},\dotsc,y_{n}]$ and we examine the possible degrees of syzygies among $r$ forms $\{f_{1},\dotsc,f_{r}\}$ of degree $(a,b)$ with $a,b>0$. This allows us to give a positive answer to Question \ref{Question:Brenner_intro_projective} in a large amount of cases (see Theorem \ref{Theorem:main}).

This work is organized as follows. In Section \ref{Section:Preliminaries} we gather the basic results regarding stability of vector bundles on polarized projective varieties $(X,L)$ and the theory of syzygy bundles of linear systems needed in the sequel. Afterwards, the paper is divided in two main sections. In Section \ref{Section:moduli} we consider a large family of polarized projective varieties $(X,L)$, which include, but is not limited to, smooth complete projective toric varieties, Grassmannians or flag varieties. We prove (Theorem \ref{Theorem:moduli}) that in this setting an $L$-stable syzygy bundle of a non-necessarily complete linear system corresponds to a smooth point in its moduli space, and we give explicitly the dimension of the irreducible component containing that point. The second part of this work is found in Section \ref{Section:stability of non-complete}, where we aim to answer Question \ref{Question:Brenner_intro_projective} for products of projective spaces. Our main results in this regard are Theorem \ref{Theorem:main} and Corollary \ref{Corollary:PnxPm} which answers Question \ref{Question:Brenner_intro_projective} in a large number of cases. In Theorem \ref{Theorem:moduli_PnxPm} we apply this results to give insight on the moduli spaces of syzygy bundles on $\PP^{m}\times\PP^{n}$. Finally, we end this section posing some open questions regarding the stability of syzygy bundles of non-complete linear systems.

\vskip 5mm \noindent
 {\bf Acknowledgements.} The authors are thankful to the anonymous referee for useful comments. The second author is grateful to Liena Colarte G\'omez for useful discussions regarding syzygies of bigraded homogeneous forms.

\section{Basic results}\label{Section:Preliminaries}

Let $(X,L)$ be a polarized smooth projective variety of dimension $d$, defined over an algebraically closed field $\kk$ of characteristic zero and let 
 $L$ be a  globally generated line bundle. For any vector subspace $V\subset \HH^{0}(X,L)$ we denote by $\cL_{V}$ the corresponding linear system. If $\cL_{V}$ is base point-free, we define the {\em syzygy bundle} $M_{V}$ as the kernel of the evaluation map 
 \[
 ev:V\otimes {\cO}_X \longrightarrow \cO_X (L).
 \]
  Notice that $M_{V}$ is a vector bundle fitting in the following short exact sequence 
\begin{equation}\label{Eq:Syzygy bundle exact sequence}
0 \longrightarrow M_V \longrightarrow V\otimes {\cO}_X \longrightarrow \cO_X(L)\longrightarrow 0. \end{equation}
In particular we have:

\vskip 2mm
\begin{itemize}
    \item $c_1(M_V)=-c_1(L)$,
    \item $\rk(M_V)=\dim(V)-1$,
    \item $\mu_{L}(M_V)=\frac{-L^d}{\dim(V)-1}.$
\end{itemize}

\vskip 4mm
The goal of this paper is to study the stability of the vector bundle $M_{V}$ for appropriate subspaces $V\subset\HH^{0}(X,\cO_{X}(L))$ and to obtain local information on the geometry of their corresponding moduli spaces. Let us first recall the definition and some key result about the stability of vector bundles.

\begin{definition}\rm Let $(X,L)$ be a polarized smooth variety of dimension $d$. A vector bundle $E$ on $X$ is $L-$stable (resp. $L-$semistable) if for any subsheaf $F\subset E$ with $0<\rk(F)<\rk(E)$, we have 
\[
\mu_{L}(F):=\frac{c_1(F)L^{d-1}}{\rk(F)}<\mu_{L}(E):=\frac{c_1(E)L^{d-1}}{\rk(E)} \]
\[\text{(resp.}\;\mu_{L}(F):=\frac{c_1(F)L^{d-1}}{\rk(F)}\le \mu_{L}(E):=\frac{c_1(E)L^{d-1}}{\rk(E)}\;).
\]
\end{definition}

\vskip 2mm
The following result is a cohomological characterization of the stability, and it will play a central role in the proof of our main result.

\begin{lemma}  \label{key} \cite[Lemma 2.1]{C} Let $(X,L)$ be a polarized smooth variety of dimension $d$. Let  $E$ be a vector bundle on $X$. Suppose  that for any integer $q$ and any line bundle $G$ on $X$ such that
$$ 0<q<\rk(E) \quad \text{ and } \quad (G\cdot L^{d-1})\ge q\mu _L(E) $$
one has
$ \HH ^0(X,\bigwedge^qE\otimes G^{\vee})=0.
$
Then, $E$ is $L-$stable.
\end{lemma}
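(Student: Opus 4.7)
The plan is to argue by contradiction. Suppose $E$ is not $L$-stable; then there exists a subsheaf $F\subset E$ with $0<q:=\rk(F)<\rk(E)$ and $\mu_{L}(F)\geq \mu_{L}(E)$. My aim is to cook up from $F$ a line bundle $G$ violating the hypothesis, namely a $G$ with $(G\cdot L^{d-1})\geq q\mu_{L}(E)$ and $\HH^{0}(X,\bigwedge^{q}E\otimes G^{\vee})\neq 0$.

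First, I would replace $F$ by its saturation in $E$. This preserves the rank, makes $E/F$ torsion-free, and can only increase the slope, so the inequality $\mu_{L}(F)\geq \mu_{L}(E)$ is preserved. Next, taking the $q$-th exterior power of $F\hookrightarrow E$ produces an inclusion $\bigwedge^{q}F\hookrightarrow \bigwedge^{q}E$ of a rank-one torsion-free sheaf into a locally free sheaf. I then set
\[
G:=(\bigwedge^{q}F)^{\vee\vee}.
\]
Because $X$ is smooth (hence locally factorial), this rank-one reflexive sheaf is in fact a line bundle, and $c_{1}(G)=c_{1}(\bigwedge^{q}F)=c_{1}(F)$. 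Consequently
\[
(G\cdot L^{d-1})=c_{1}(F)\cdot L^{d-1}=q\,\mu_{L}(F)\geq q\,\mu_{L}(E),
\]
so $G$ meets the numerical requirement in the hypothesis of the lemma.

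The key step is to upgrade the inclusion $\bigwedge^{q}F\hookrightarrow \bigwedge^{q}E$ into an injection $G\hookrightarrow \bigwedge^{q}E$. Since $\bigwedge^{q}E$ is locally free, hence reflexive, and the natural map $\bigwedge^{q}F\hookrightarrow G$ is an isomorphism off a closed subset of codimension $\geq 2$, the composite extends uniquely to a morphism $G\to \bigwedge^{q}E$. This extension is a nonzero map from a line bundle to a torsion-free sheaf, hence injective. Twisting by $G^{\vee}$ yields a nonzero global section of $\bigwedge^{q}E\otimes G^{\vee}$, contradicting the vanishing hypothesis for this pair $(q,G)$. Therefore $E$ is $L$-stable.

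The main obstacle is the technical step of producing the line bundle $G$ and promoting the generically defined rank-one inclusion to an honest injection $G\hookrightarrow \bigwedge^{q}E$; this is exactly where smoothness of $X$ (ensuring rank-one reflexive sheaves are line bundles) and reflexivity of $\bigwedge^{q}E$ are used. Once this formal manipulation is in place, the numerical check on $(G\cdot L^{d-1})$ is immediate from the definition of slope.
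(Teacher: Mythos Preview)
The paper does not supply its own proof of this lemma; it is simply quoted from \cite[Lemma~2.1]{C}. Your argument is correct and is exactly the standard contrapositive proof: from a destabilizing saturated subsheaf $F$ of rank $q$ one forms the line bundle $G=\det F=(\bigwedge^{q}F)^{\vee\vee}$, checks $G\cdot L^{d-1}=c_{1}(F)\cdot L^{d-1}=q\,\mu_{L}(F)\ge q\,\mu_{L}(E)$, and produces a nonzero section of $\bigwedge^{q}E\otimes G^{\vee}$ by extending the generic inclusion across a closed subset of codimension $\ge 2$ using reflexivity of $\bigwedge^{q}E$.

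One minor over-statement worth flagging: the claim that ``taking the $q$-th exterior power of $F\hookrightarrow E$ produces an inclusion $\bigwedge^{q}F\hookrightarrow\bigwedge^{q}E$ of a rank-one torsion-free sheaf'' is not literally true in general, since $\bigwedge^{q}$ of a torsion-free (even reflexive) sheaf can acquire torsion, and the induced map need only be injective modulo that torsion. This does not damage your proof, because you immediately pass to the reflexive hull $G$ and argue via extension from the open set where $F$ is locally free; that step is carried out correctly and is precisely what makes the argument work.
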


\begin{remark}\rm
A vector bundle $E$ satisfying the hypothesis of Lemma \ref{key} is said to be {\em cohomologically stable}. It is worthwhile to point out that any cohomological stable vector bundle on a polarized variety  $(X,L)$ is $L$-stable but not vice versa.
\end{remark}

The stability of syzygy bundles associated to complete linear systems (i.e. when $V=\HH^{0}(X,\cO_{X}(L))$) on polarized varieties $(X,L)$ has received a lot of attention on the last decades (see, for instance, \cite{CL, EL, ELM, Fle, MR, T}). Our goal is to answer the following much more general question:
\begin{question}\label{Question:Brenner}\rm
Let us fix an ample line bundle $L$ on a smooth projective variety $X$ of dimension $d$. 
For which integers $r\leq  \dim\HH^{0}(X,L)$,  is there a base point-free linear system $\cL_{V}$ associated to a subspace $V\subset\HH^{0}(X,L)$ of dimension $r$, such that the syzygy bundle $M_{V}$ is $L-$stable?
\end{question}

Question \ref{Question:Brenner} is a generalization of a question raised by Brenner in \cite[Question 7.8]{Brenner}, regarding the stability of syzygy bundles of non-complete linear systems in $\PP^{N}$. This problem has been further studied in \cite{CMM, MM, C}, where a complete answer for the case $(X,L)=(\PP^{N},\cO_{\PP^{N}}(d))$ is given. 

\begin{remark}\label{Remark:stability open property}\rm
i) Since the $L-$stability is an open property, Question \ref{Question:Brenner} is equivalent to ask for which integers $r\leq \HH^{0}(X,L)$, the syzygy bundle  $M_{V}$ corresponding to a {\em general} base point-free linear system $\cL_{V}$ given by a subspace $V\subset \HH^{0}(X,L)$ of dimension $r$, is $L-$stable.

ii) In the case $V=\HH^{0}(X,L)$ there is a conjecture by Ein, Lazasferld and Mustopa (see \cite[Conjecture 2.6]{ELM} or Conjecture \ref{Conjecture:stable}) which addresses the stability of the syzygy bundle $M_{V}$.
\end{remark}
\begin{remark}\rm
As shown in Section \ref{Section:moduli}, answering Question \ref{Question:Brenner}, and thus providing general syzygy bundles which are {\em $L-$stable}, shed new light on the geometry of the moduli spaces where the syzygy bundles are represented as a point.
\end{remark}

\section{Rigidness of the syzygy bundles}\label{Section:moduli}
In this section we focus on a polarized projective variety $(X,L)$ of dimension $d$ and we consider a syzygy bundle $M_{V}$ associated to a  base point-free linear system $\cL_{V}$ with $V\subset \HH^{0}(X,L)$ a subspace of dimension $r\leq \hh^{0}(X,\cO_{X}(L))$. In general it is not known if $M_{V}$ is $L-$stable (see Conjecture \ref{Conjecture:stable} and Question \ref{question}). Notwithstanding, if $M_{V}$ is $L$-stable, then it represents a point inside a suitable moduli space $\cM=\cM_{X}(r-1;c_{1},\dotsc,c_{\min\{r-1,d\}})$ of rank $r-1$ $L-$stable vector bundles with Chern classes $c_{i}=c_{i}(M_{V})$ for $1\leq i\leq \min\{r-1,d\}$. In this section we assume that $M_{V}$ is $L$-stable, and we study the geometry of this moduli space $\cM$ around $[M_{V}]$.

Recall that the Zariski tangent space of $\cM$ at a point $[E]$ is canonically given by
\[
T_{[E]}\cM\cong \Ext^{1}(E,E)\cong \HH^{1}(X,E\otimes E^{\vee}),
\]
and we say that $E$ is infinitesimally rigid if $[E]$ is an isolated point, or equivalently if $\dim T_{[M_{V}]}\cM=0$. We have the following result:

\begin{theorem}\label{Theorem:moduli}
Let $(X,L)$ be a polarized projective variety such that $\HH^{1}(X,\cO_{X})=\HH^{2}(X,\cO_{X})=\HH^{3}(X,\cO_{X})=0$ and $\HH^{1}(X,\cO_{X}(L))=0$. For any base point-free linear system $\cL_{V}$ with $V\subset \HH^{0}(X,L)$, denote by $M_{V}$ the corresponding syzygy bundle. If $M_{V}$ is $L-$stable and either
\begin{itemize}
    \item[a)] $\dim(X)\geq 4$,
    \item[b)] $\dim(X)=3$ and the linear system $\cL_{V}$ is complete ($V=\HH^{0}(X,\cO_{X}(L))$).
    \item[c)] $\dim(X)=3$, the linear system $\cL_{V}$ is non-complete and $\HH^{3}(X,\cO_{X}(-L))=0$, or
    \item[d)] $\dim(X)=2$,
\end{itemize}
then
\begin{itemize}
\item[i)] $[M_{V}]\in\cM$ is a smooth point.
\item[ii)] The dimension of the irreducible component in $\cM$ containing $[M_{V}]$ is either
\begin{itemize}
\item[] $\dim_{\kk}T_{[M_{V}]}\cM = r(\hh^{0}(X,\cO_{X}(L))-r)$, in Cases a), b), c); or
\item[] $\dim_{\kk}T_{[M_{V}]}\cM = r(\hh^{0}(X,\cO_{X}(L))-r)+r\hh^{2}(X,\cO_{X}(-L))$, in Case d).
\end{itemize}

\end{itemize}
In particular, when $\dim(X)\geq3$, $M_{V}$ is infinitesimally rigid if and only if $V=\HH^{0}(X,\cO_{X}(L))$.
\end{theorem}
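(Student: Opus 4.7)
The strategy is standard deformation theory for stable sheaves: since $M_V$ is $L$-stable, $T_{[M_V]}\cM\cong\Ext^1(M_V,M_V)$, and obstructions to extending first-order deformations lie in $\Ext^2_0(M_V,M_V)$. The hypothesis $\HH^2(X,\cO_X)=0$ makes the trace map vanish, so $\Ext^2_0(M_V,M_V)=\Ext^2(M_V,M_V)$; thus $[M_V]$ will be a smooth point as soon as $\Ext^2(M_V,M_V)=0$, and the irreducible component of $\cM$ through it will then have dimension $\dim_\kk\Ext^1(M_V,M_V)$.

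I would compute both $\Ext$ groups by applying $\Hom(-,M_V)$ to the defining sequence \eqref{Eq:Syzygy bundle exact sequence}, producing the long exact sequence
\[
\cdots\to \HH^i(M_V(-L))\to V^\vee\otimes \HH^i(M_V)\to \Ext^i(M_V,M_V)\to \HH^{i+1}(M_V(-L))\to\cdots.
\]
The groups $\HH^i(M_V)$ and $\HH^i(M_V(-L))$ are read off from \eqref{Eq:Syzygy bundle exact sequence} itself and its twist by $-L$, using the vanishings $\HH^j(X,\cO_X)=0$ for $j=1,2,3$, $\HH^1(X,\cO_X(L))=0$, $\HH^0(X,\cO_X(-L))=0$, and Kodaira vanishing $\HH^j(X,\cO_X(-L))=0$ for $j<\dim X$ (valid for $L$ ample in characteristic zero). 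Combined with $\dim\Hom(M_V,M_V)=1$ from stability, a direct dimension count in the $\Ext$-sequence yields
\[
\dim\Ext^1(M_V,M_V)= r\bigl(\hh^0(X,\cO_X(L))-r\bigr)+r\,\hh^2(X,\cO_X(-L)),
\]
which collapses to the claimed $r(\hh^0(X,\cO_X(L))-r)$ in cases a), b), c) (by Kodaira, plus the explicit hypothesis of c)), and keeps the extra summand $r\,\hh^2(X,\cO_X(-L))$ in case d).

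For smoothness the same long exact sequence gives an embedding $\Ext^2(M_V,M_V)\hookrightarrow \HH^3(M_V(-L))$, and the twisted defining sequence identifies $\HH^3(M_V(-L))\cong V\otimes \HH^3(X,\cO_X(-L))$ whenever $\dim X\geq 3$. This right-hand side vanishes in case a) by Kodaira (since $3<\dim X$), in case c) by the standing hypothesis, and in case d) for dimensional reasons. The delicate point is case b), where $\HH^3(X,\cO_X(-L))$ can be nonzero on a threefold and the $\Ext^2$ vanishing is not visible from the evident hypotheses. Here I would sidestep controlling $\Ext^2$ directly: since $V=\HH^0(X,\cO_X(L))$, the formula for $\dim\Ext^1$ gives $0$; because $\cO_{\cM,[M_V]}$ is a Noetherian local $\kk$-algebra with $\mathfrak{m}/\mathfrak{m}^2\cong\Ext^1(M_V,M_V)^\vee=0$, Nakayama forces $\mathfrak{m}=0$, whence $\cO_{\cM,[M_V]}\cong\kk$ and $[M_V]$ is an isolated, reduced---hence smooth---point of $\cM$.

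The rigidness statement for $\dim X\geq 3$ is then immediate: one is in case a), b) or c), so $\dim\Ext^1(M_V,M_V)=r(\hh^0(X,\cO_X(L))-r)$, which vanishes precisely when $r=\hh^0(X,\cO_X(L))$, i.e.\ when $V=\HH^0(X,\cO_X(L))$. The main technical obstacle I expect is the careful bookkeeping of cohomological vanishings case by case in the long exact sequences; conceptually the subtle step is recognising that case b) cannot be handled by a direct bound on $\Ext^2$ and must be rescued by the Nakayama argument once $\Ext^1$ has been shown to vanish.
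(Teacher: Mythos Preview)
Your proposal is correct and follows essentially the same route as the paper. The only cosmetic difference is that you phrase everything via $\Hom(-,M_V)$ and $\Ext$-groups, whereas the paper dualizes \eqref{Eq:Syzygy bundle exact sequence} and tensors by $M_V$ to work directly with the cohomology of $M_V\otimes M_V^\vee$; since $M_V$ is locally free these are the same long exact sequence, and the paper's treatment of case b) is exactly your Nakayama argument.
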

\begin{proof}
Let us start studying $\HH^{2}(X,M_{V}\otimes M_{V}^{\vee})$. We consider the exact sequence
\begin{equation}\label{Eq:exact sequence M_L}
0\longrightarrow M_{V}\longrightarrow \cO_{X}^{r}\longrightarrow \cO_{X}(L)\longrightarrow 0.
\end{equation}
Dualizing the exact sequence (\ref{Eq:exact sequence M_L}) and tensoring it by $M_{V}$, we obtain:
\begin{equation}\label{Eq:exact sequence M_L**M_L^v}
0\longrightarrow M_{V}(-L)\longrightarrow M_{V}\otimes \cO^{r}_{X}\longrightarrow M_{V}\otimes M_{V}^{\vee}\longrightarrow 0.
\end{equation}
From the exact sequence of cohomology of (\ref{Eq:exact sequence M_L**M_L^v}) we have
\begin{equation}\label{Eq:H^2(M_L**M_L^v)}
\dotsb\longrightarrow \HH^{2}(X,M_{V}^{r})\longrightarrow \HH^{2}(X,M_{V}\otimes M_{V}^{\vee})\longrightarrow \HH^{3}(X,M_{V}(-L))\longrightarrow\dotsb
\end{equation}
To see that $\HH^{2}(X,M_{V}\otimes M_{V}^{\vee})=0$ it is enough to see that $\HH^{2}(X,M_{V})=\HH^{3}(X,M_{V}(-L))=0$.
From the exact sequence of cohomology of (\ref{Eq:exact sequence M_L}) and the hypothesis  $\HH^{1}(X,\cO_{X}(L))=\HH^{2}(X,\cO_{X})=0$ we get
\begin{equation}\label{Eq:H^2(M_L)}
\HH^{2}(X,M_{V})=0.
\end{equation}

Let us compute $\HH^{3}(X,M_{V}(-L))$.  From the exact sequence (\ref{Eq:exact sequence M_L}) tensored by $\cO_{X}(-L)$ we have 
\begin{equation}\label{Eq:H^3(M_L(-L)) exact sequence}
\dotsb\longrightarrow \HH^{2}(X,\cO_{X})\longrightarrow \HH^{3}(X,M_{V}(-L))\longrightarrow \HH^{3}(X,\cO_{X}(-L)^{r})\rightarrow\HH^{3}(X,\cO_{X})\longrightarrow\dotsb.
\end{equation}
Since we assume that $\HH^{2}(X,\cO_{X})=\HH^{3}(X,\cO_{X})=0$ we have that
\begin{equation}\label{Eq:H^3(M_L(-L))}
    \HH^{3}(X,M_{V}(-L))\cong\HH^{3}(X,\cO_{X}(-L)).
\end{equation}
We leave Case b) to the end of the proof. In any other case we have $\HH^{3}(X,\cO_{X}(-L))=0$ and it follows that
\[
\HH^{2}(X,M_{V}\otimes M_{V}^{\vee})=0.
\]
Therefore $[M_{V}]$ is a smooth point in $\cM$.
Let us compute $T_{[M_{V}]}\cM\cong \HH^{1}(X,M_{V}\otimes M_{V}^{\vee})$.

Since $V\subset\HH^{0}(X,\cO_{X}(L))$, the map $(\HH^{0}(ev):\HH^{0}(X,\cO_{X}^{r})\rightarrow \HH^{0}(X,\cO_{X}(L)))$ is injective. Hence, 
\begin{equation}\label{Eq:M_V no global sections}
\HH^{0}(X,M_{V})=0,
\end{equation}
From (\ref{Eq:exact sequence M_L**M_L^v}) and using (\ref{Eq:H^2(M_L)}) we have that
\begin{multline}\label{Eq:H^1(M_L**M_L^v)}
0
\longrightarrow\HH^{0}(X,M_{V}\otimes M_{V}^{\vee})
\longrightarrow\HH^{1}(X,M_{V}(-L))
\longrightarrow\HH^{1}(X,M_{V}^{r})\\
\longrightarrow \HH^{1}(X,M_{V}\otimes M_{V}^{\vee})
\longrightarrow \HH^{2}(X,M_{V}(-L))\rightarrow0.
\end{multline}

Since $M_{V}$ is $L-$stable, in particular, it is simple. So we have
\begin{equation}\label{Eq:H^0(M_L**M_L^v)}
\HH^{0}(X,M_{V}\otimes M_{V}^{\vee})\cong\kk.
\end{equation}
On the other hand, twisting (\ref{Eq:exact sequence M_L}) we obtain
\begin{multline*}
0\longrightarrow\HH^{0}(X,\cO_{X})
\longrightarrow \HH^{1}(X,M_{V}(-L))
\longrightarrow\HH^{1}(X,\cO_{X}(-L)^{r})
\longrightarrow \HH^{1}(X,\cO_{X})\\
\longrightarrow
\HH^{2}(X,M_{V}(-L))
\longrightarrow \HH^{2}(X,\cO_{X}(-L)^{r})
\longrightarrow\HH^{2}(X,\cO_{X})
\longrightarrow\dotsb
\end{multline*}
Since $\HH^{1}(X,\cO_{X})=\HH^{2}(X,\cO_{X})=0$ and by Kodaira's vanishing theorem $\HH^{1}(X,\cO_{X}(-L))=0$, we have that
\begin{equation}\label{Eq:H^i(M_L(-L))}
\begin{array}{l}
    \HH^{1}(X,M_{V}(-L))\cong\HH^{0}(X,\cO_{X})\\
    \HH^{2}(X,M_{V}(-L))\cong\HH^{2}(X,\cO_{X}(-L)^{r}).
\end{array}
\end{equation}
Finally, from (\ref{Eq:exact sequence M_L}) we have that
\begin{equation}\label{Eq:H^1(M_L) exact sequence}
0\longrightarrow \HH^{0}(X,M_{V})
\longrightarrow \HH^{0}(X,\cO_{X}^{r})
\longrightarrow \HH^{0}(X,\cO_{X}(L))
\longrightarrow \HH^{1}(X, M_{V})
\longrightarrow 0.
\end{equation}
Since $\HH^{0}(X,M_{V})=0$ (see \ref{Eq:M_V no global sections}), we obtain that
\[
    \HH^{0}(X,\cO_{X}(L))\cong\HH^{1}(X,M_{V})\oplus \HH^{0}(X,\cO^{r}_{X}),
\]
which yields
\begin{equation}\label{Eq:H^1(M_L)}
\hh^{1}(X,M_{V})=\hh^{0}(X,\cO_{X}(L))-\hh^{0}(X,\cO_{X}^{r})=\hh^{0}(X,\cO_{X}(L))-r.
\end{equation}

Finally, from (\ref{Eq:H^1(M_L**M_L^v)}) and using (\ref{Eq:H^0(M_L**M_L^v)}), (\ref{Eq:H^i(M_L(-L))}) and (\ref{Eq:H^1(M_L)})  we get:
\begin{align*}\label{Eq:HH^1(M_L**M_L^v) comput}
\hh^{1}(X,M_{V}\otimes M_{V}^{\vee})
&=\hh^{2}(X,M_{V}(-L))+r\hh^{1}(X,M_{V})-\hh^{1}(X,M_{V}(-L))+\hh^{0}(X,M_{V}\otimes M_{V}^{\vee})\\
	&=\hh^{2}(X,\cO_{X}(-L)^{r})+r(\hh^{0}(X,\cO_{X}(L))-\hh^{0}(X,\cO_{X}^{r}))-\hh^{0}(X,\cO_{X})+1\\
	&=r\hh^{2}(X,\cO_{X}(-L))+r(\hh^{0}(X,\cO_{X}(L))-r).
\end{align*}
In particular, for Case d) ($\dim(X)=2$), the proof is finished.

On the other hand, notice that if $\dim(X)\geq 3$, by Kodaira's vanishing theorem we have $\HH^{2}(X,\cO_{X}(-L))=0$. Thus, we have
\begin{equation}\label{Eq:H^1(M_L**M_L^v dim>=3}
\hh^{1}(X,M_{V}\otimes M_{V}^{\vee})=r(\hh^{0}(X,\cO_{X}(L))-r)
\end{equation}
the proof now follows for Case a) ($\dim(X)\geq 4$ and Case c) ($\dim(X)=3$, $V\subsetneq\HH^{0}(X,\cO_{X}(L))$ and $\HH^{3}(X,\cO_{X}(-L))=0$).

To finish the proof we need to tackle Case b), that is when $\dim(X)=3$ and $V=\HH^{0}(X,\cO_{X})$. In this case we have not seen that $[M_{V}]$ is a smooth point in $\cM$ so we cannot deduce directly that $\dim_{\kk} T_{[M_{V}]}$ gives the dimension of the irreducible component of $\cM$ containing $[M_{V}]$. However, from (\ref{Eq:H^1(M_L**M_L^v dim>=3}) we obtain that
\[
\hh^{1}(X,M_{V}\otimes M_{V}^{\vee})=r(\hh^{0}(X,\cO_{X}(L))-r)=0
\]
since in this case $r=\hh^{0}(X,\cO_{X}(L))$. Therefore $\dim_{\kk} T_{[M_{V}]}=0$, so $M_{V}$ is infinitesimally rigid and so $[M_{V}]$ is smooth.
\end{proof}
\begin{remark}
\rm i) There is a wide range of projective varieties $X$ satisfying that $\HH^{1}(X,\cO_{X})=\HH^{2}(X,\cO_{X})=\HH^{3}(X,\cO_{X})=0$. For instance, we have Grassmannians and flag varieties (\cite[Chapter 4]{Weyman}, complete smooth projective toric varieties (\cite[Chapter 9]{CLS}) or 
arithmetically Cohen-Macaulay varieties.

ii) On the other hand, when $\dim(X)=3$ the condition $\HH^{3}(X,\cO_{X}(-L))=0$ is not always satisfied even in the case of complete smooth toric varieties, as the following example shows: take $X=\PP^{3}$ and $L=\cO_{\PP^{3}}(i)$ with $i\geq4$.
When $X$ is a complete toric variety of dimension $n$, this technical condition can be tackled using the Batirev-Borisov vanishing theorem. Indeed, for any Cartier nef divisor $D$ on $X$ there is a lattice polytope $P_{D}$ such that
\[
\HH^{0}(X,\cO_{X}(D))=|P_{D}|
\]
and
\[
\hh^{i}(X,\cO_{X}(-D)=
\left\{
\begin{array}{ll}
    0, &  i\neq\dim(P_{D})\\
    |\relint( P_{D})|, & i=\dim(P_{D}),
\end{array}
\right.
\]
where $|P_{D}|$ is the number of lattice points of $P_{D}$ and $|\relint(P_{D})|$ is the number of lattice points in the relative interior of the polytope $P_{D}$
(see \cite[Section 5 and Theorem 9.2.7]{CLS}).
\end{remark}

Theorem 3.1 has been recently generalized in \cite{FM} and \cite{FM1} where using generalized syzygy bundles we 
construct recursively open subspaces of moduli spaces of simple
sheaves on X that are smooth, rational, quasiprojective varieties.

 At the end of Section \ref{Section:stability of non-complete} we will apply the results of this section (see Theorem \ref{Theorem:moduli_PnxPm}) to general syzygy bundles on $\PP^{m}\times\PP^{n}$ associated to linear systems.

\section{Stability of syzygy bundles of non-complete linear systems}\label{Section:stability of non-complete}
The aim of this Section is to answer Question \ref{Question:Brenner} for $X=\PP^{m}\times\PP^{n}$. Notice that $X$ may be viewed as the smooth complete toric variety toric variety with Cox ring 
\[
R=\kk[x_{0},\dotsc,x_{m},y_{0},\dotsc,y_{n}]
\]
graded by 
\begin{align*}
    &\deg(x_{i})=(1,0)&&0\leq i\leq m\\
    &\deg(y_{i})=(0,1)&&0\leq i\leq n.
\end{align*}
Any line bundle on $X$ is of the form $\cO_{X}(a,b):=\pi_{1}^{\ast}\cO_{\PP^{m}}(a)\otimes\pi_{2}^{\ast}\cO_{\PP^{n}}(b)$ for some integers $a,b$. A line bundle $\cO_{X}(a,b)$ is ample if and only if it is very ample, if and only if $a,b>0$; and it is effective if and only if $a,b\geq0$. Moreover, we have the following identification of vector spaces:
\[
\HH^{0}(X,\cO_{X}(a,b))\cong R_{(a,b)}:=\langle x_{0}^{\alpha_{0}}\dotsb x_{m}^{\alpha_{m}}y_{0}^{\beta_{0}}\dotsb y_{n}^{\beta_{n}}\mid\alpha_{0}+\dotsb+\alpha_{m}=a,\;\beta_{0}+\dotsb+\beta_{n}=b\rangle.
\]
In particular, for all integers $a,b\geq0$ we have
\[
\dim \HH^{0}(X,\cO_{X}(a,b))=\binom{m+a}{m}\binom{n+b}{n}.
\]

In this setting, Lemma \ref{key} can be rephrased as follows:
\begin{lemma}\label{Lemma:key rephrased}
Take $X=\PP^{m}\times\PP^{n}$ and $L=\cO_{X}(a,b)$ a very ample line bundle. Let $V\subset \HH^{0}(X,L)$ be a vector space such that $\cL_{V}$ is a base point-free linear system. The syzygy bundle $M_{V}$ is $L-$stable if for any $0<q<r-1$ and any line bundle $G=\cO_{X}(x,y)$, then 
\[
\HH^{0}(X,\bigwedge^{q}M_{V}(x,y))\neq0 \quad \text{implies} \quad 
bmx+any>\frac{qab(m+n)}{r-1}.
\]
\end{lemma}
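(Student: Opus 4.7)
The plan is to apply Lemma \ref{key} directly to $E = M_V$ with the line bundle $G$ chosen so that $G^{\vee} = \cO_X(x,y)$, i.e. $G = \cO_X(-x,-y)$, and then to unpack the numerical condition $(G \cdot L^{d-1}) \geq q\mu_L(E)$ using explicit intersection theory on $X = \PP^m \times \PP^n$. Concretely, the rephrased lemma is just the contrapositive of Lemma \ref{key}: whenever $\HH^0(X, \bigwedge^q M_V(x,y)) \neq 0$, we should conclude that $(G \cdot L^{d-1}) < q\mu_L(M_V)$, and this last inequality, once written in coordinates, is exactly $bmx + any > \frac{qab(m+n)}{r-1}$.

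First I would set up the numerical invariants. Writing $H_1, H_2$ for the pullbacks of the hyperplane classes of the two factors, so $L = aH_1 + bH_2$, only the monomial $H_1^m H_2^n$ in the Chow ring is nonzero in top degree. Expanding $L^{m+n}$ by the binomial theorem gives $L^{m+n} = \binom{m+n}{m} a^m b^n$, hence $\mu_L(M_V) = -\binom{m+n}{m}a^m b^n / (r-1)$ since $c_1(M_V) = -c_1(L)$ and $\rk(M_V) = r-1$. Similarly, for $G = \cO_X(-x,-y)$ only the two terms with $H_1^{m-1}H_2^n$ and $H_1^m H_2^{n-1}$ in $L^{m+n-1}$ contribute to $G \cdot L^{m+n-1}$, giving
\[
G \cdot L^{m+n-1} = -x \binom{m+n-1}{m-1} a^{m-1} b^n \,-\, y \binom{m+n-1}{m} a^{m} b^{n-1}.
\]

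Next I would translate the stability inequality. By Lemma \ref{key}, $M_V$ is $L$-stable provided that for every $0 < q < r-1$ and every such $G$ with $(G \cdot L^{d-1}) \geq q\mu_L(M_V)$, the space $\HH^0(X, \bigwedge^q M_V \otimes G^{\vee}) = \HH^0(X, \bigwedge^q M_V(x,y))$ vanishes. Taking the contrapositive: non-vanishing of $\HH^0$ must force $(G \cdot L^{d-1}) < q\mu_L(M_V)$. Substituting the computations above and multiplying both sides by $-1$, this becomes
\[
x \binom{m+n-1}{m-1} a^{m-1} b^n + y \binom{m+n-1}{m} a^{m} b^{n-1} \,>\, \frac{q \binom{m+n}{m} a^m b^n}{r-1}.
\]

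Finally I would simplify using the identities $\binom{m+n-1}{m-1} = \frac{m}{m+n}\binom{m+n}{m}$ and $\binom{m+n-1}{m} = \frac{n}{m+n}\binom{m+n}{m}$, then divide through by $\binom{m+n}{m}a^{m-1}b^{n-1}/(m+n)$, which collapses the inequality to $bmx + any > \frac{qab(m+n)}{r-1}$, matching the statement. This step is purely arithmetic; the only mild pitfall is keeping signs straight when passing between $G$ and $G^{\vee}$, and making sure the strict inequality in the contrapositive matches the strict inequality in the displayed conclusion. No obstacle beyond this bookkeeping is expected, since both the intersection computations on $\PP^m \times \PP^n$ and the two binomial identities are standard.
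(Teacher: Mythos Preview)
Your proof is correct and follows essentially the same approach as the paper: both reduce to Lemma~\ref{key} by computing $L^{m+n}$ and $G\cdot L^{m+n-1}$ explicitly on $\PP^m\times\PP^n$ and simplifying the resulting inequality. You have simply spelled out the contrapositive and the binomial identities more carefully (and with cleaner bookkeeping on the sign convention $G=\cO_X(-x,-y)$) than the paper's two-line proof.
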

\begin{proof}
It follows from Lemma \ref{key} using that 
\[
L^{m+n}=a^{m}b^{n}\binom{m+n}{m}=a^{m}b^{n}\binom{m+n-1}{n}(\frac{m}{n}+1)
\]
and
\[
G\cdot L=a^{m-1}b^{n-1}\binom{m+n-1}{m}(b\frac{m}{n}x+ay).
\]
\end{proof}

Notice that if $G=\cO(x,y)$ is a line bundle we have, from (\ref{Eq:Syzygy bundle exact sequence}), the following inclusion of vector spaces
\[
\HH^{0}(X,\bigwedge^{q}M_{V}(x,y))\hookrightarrow \HH^{0}(X,\bigwedge^{q}(V\otimes\cO_{X})(x,y))=\HH^{0}(X,(\bigwedge^{q} V)\otimes\cO_{X}(x,y)).
\]
Hence, if $\HH^{0}(X,\bigwedge^{q}M_{V}(x,y))\neq0$, then $G=\cO_{X}(x,y)$ is effective. Therefore, we have $x,y\geq0$. 

Our first goal is to prove that the syzygy bundle $M_{V}$ associated to any sufficiently large vector space $V\subset \HH^{0}(X,L)$ is always $L-$stable. More precisely, we show that for any base point-free linear system $\cL_{V}$ associated to an $r-$dimensional vector space $V\subset \HH^{0}(X,L)$ such that 
\begin{equation}\label{Eq:Range A}
\frac{a(m+n)}{\min(m,n)}<r\leq \binom{a+m}{m}\binom{b+n}{n},
\end{equation}
the syzygy bundle $M_{V}$ is $L-$stable. To this end, the following Lemma is needed.

\begin{lemma}
Take $X=\PP^{m}\times\PP^{n}$ and $L=\cO_{X}(a,b)$ a very ample line bundle. Let $V\subset \HH^{0}(X,L)$ be a vector space such that $\cL_{V}$ is a base point-free linear system. For any $0<q<r-1$ and any line bundle $G=\cO_{X}(x,y)$, if $\HH^{0}(\bigwedge^{q}M_{V}(x,y))\neq 0$, then $x+y\geq q$.

\end{lemma}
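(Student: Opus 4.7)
The plan is to argue by induction on the exterior degree $q$.

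For the base case $q = 1$, we twist the defining sequence (\ref{Eq:Syzygy bundle exact sequence}) by $\cO_X(x, y)$ and pass to global sections: a nonzero $\omega \in \HH^{0}(X, M_V(x, y))$ corresponds to a nontrivial syzygy $\sum_{i=1}^{r} g_i f_i = 0$ with $g_i \in \HH^{0}(X, \cO_X(x, y))$ and $\{f_1, \ldots, f_r\}$ a basis of $V$. When $x = y = 0$ the $g_i$'s are scalars, yielding a linear dependence among the $f_i$ inside $\HH^{0}(X, \cO_X(a, b))$ and contradicting the hypothesis $V \subset \HH^{0}(X, \cO_X(a, b))$. Hence $x + y \geq 1 = q$.

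For the inductive step $q \geq 2$, I would use the identification
\[
\bigwedge^{q} M_V = \ker\bigl(\iota_\phi \colon \bigwedge^{q} V \otimes \cO_X \to \bigwedge^{q-1} V \otimes \cO_X(a, b)\bigr),
\]
where $\iota_\phi$ is the Koszul contraction with $\phi = \sum_{i} f_i\, e_i^{\ast}$; this identification is obtained by iterating (\ref{Eq:Syzygy bundle exact sequence}) and its exterior powers. A nonzero $\omega \in \HH^{0}(X, \bigwedge^{q} M_V(x, y))$ writes as $\omega = \sum_{|I| = q} g_I\, e_I$ with $g_I \in R_{(x, y)}$, and the condition $\iota_\phi(\omega) = 0$ amounts to
\[
\sum_{i \notin J} \pm\, g_{J \cup \{i\}}\, f_i = 0 \quad \text{in } R_{(x+a,\, y+b)}
\]
for every $(q-1)$-subset $J \subset \{1, \ldots, r\}$. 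Since $\iota_{e_k^{\ast}}$ anticommutes with $\iota_\phi$, each contraction $\iota_{e_k^{\ast}}(\omega)$ lies in $\HH^{0}(X, \bigwedge^{q-1} M_V(x, y))$, and the Euler identity $q\omega = \sum_{k} e_k \wedge \iota_{e_k^{\ast}}(\omega)$ ensures that not all of these contractions vanish. Applying the inductive hypothesis to some nonzero $\iota_{e_k^{\ast}}(\omega)$ then yields $x + y \geq q - 1$.

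The main obstacle is closing the resulting off-by-one gap, that is, ruling out the boundary case $x + y = q - 1$. For this, I plan to expand each $g_I$ in the monomial basis of $R_{(x, y)}$ and analyze the above system of equations by comparing coefficients of monomials of bidegree $(x + a, y + b)$. The bigraded structure of the Cox ring $R = \kk[x_0, \ldots, x_m, y_0, \ldots, y_n]$, which splits the variables into the two groups $\{x_i\}$ and $\{y_j\}$, is used in an essential way: it produces an overdetermined linear system in the coefficients of the $g_I$ that, because every monomial contributing to $g_I$ has total degree strictly less than $q$, should force every $g_I$ to vanish, contradicting $\omega \neq 0$. It is precisely this step that distinguishes the $\PP^m \times \PP^n$ setting with $a, b \geq 1$ from a single projective space, where the analogous bound would fail (as is seen already at $q = 2$ via Bott's formula for $\HH^{0}(\PP^n, \Omega^{q}(q+1))$).
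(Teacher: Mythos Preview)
Your approach is genuinely different from the paper's, but it has a real gap that you yourself flag. The contraction $\iota_{e_k^\ast}(\omega)$ lands in $\HH^{0}(X,\bigwedge^{q-1}M_{V}(x,y))$ with the \emph{same} twist $(x,y)$, so the inductive hypothesis only delivers $x+y\geq q-1$, and you are left to exclude the boundary case $x+y=q-1$. Your plan for that step---expand the $g_I$ in monomials and argue that the bigraded structure makes the resulting linear system overdetermined---is not carried out; ``should force every $g_I$ to vanish'' is an expectation, not an argument. Worse, the induction has not actually reduced the difficulty: ruling out $x+y=q-1$ for $\bigwedge^{q}M_{V}$ is not visibly easier than proving $x+y\geq q$ directly, so the crucial work would have to be redone at every $q$. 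Your remark that the single-$\PP^{n}$ analogue fails (via Bott) is correct and shows that \emph{some} use of the bigrading is unavoidable, but you have not identified what that use is.

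The paper bypasses the induction entirely by working with generators of the syzygy module in the Cox ring. Writing $M_{V}(-L)=\widetilde{K_{V}}$ with $K_{V}=\syz(f_{1},\dotsc,f_{r})$, a minimal presentation
\[
\bigoplus_{i=1}^{\mu}R(-a-\alpha_{i},-b-\beta_{i})\twoheadrightarrow K_{V}
\]
necessarily has $(\alpha_{i},\beta_{i})\in\NN^{2}$ with $\alpha_{i}+\beta_{i}\geq 1$: the $f_{i}$ are linearly independent, so there is no syzygy in degree $(a,b)$, while the $\NN^{2}$-grading forces $\alpha_{i},\beta_{i}\geq 0$. This yields a surjection $\bigoplus_{i}\cO_{X}(-\alpha_{i},-\beta_{i})\twoheadrightarrow M_{V}$, and taking $q$-th exterior powers produces a surjection onto $\bigwedge^{q}M_{V}$ from a direct sum of line bundles $\cO_{X}(-\sum_{j}\alpha_{i_{j}},-\sum_{j}\beta_{i_{j}})$, each with $\sum_{j}\alpha_{i_{j}}+\sum_{j}\beta_{i_{j}}\geq q$. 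The bound $x+y\geq q$ then follows in one stroke, with no off-by-one to repair. The moral: rather than contracting down and losing a unit each time, look at how $M_{V}$ is \emph{generated} and take exterior powers of that presentation.
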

\begin{proof} We have that 
\[
M_{V}(-L)=\widetilde{K_{V}},
\]
where $K_{V}=\syz(f_{1},\dotsc,f_{r})$ is the syzygy module of the forms $\{f_{1},\dotsc,f_{r}\}\subset R_{(a,b)}$ corresponding to a basis of $V$. Then, $K_{V}$ lies in the following exact sequence:
\[
0\longrightarrow K_{V}\longrightarrow R(-a,-b)^{r}\xrightarrow{(f_{1},\dotsc,f_{r})}R\longrightarrow0.
\]
In particular, if
\[
\bigoplus_{i=1}^{\mu}R(-a-\alpha_{i},-b-\beta_{i})
\longrightarrow K_{V}
\longrightarrow0
\]
is a minimal presentation of $K_{V}$, then we have for any $1\leq i\leq \mu$ that
\[
(\alpha_{i},\beta_{i})\in\{(x,y)\in\NN^{2}\mid x+y\geq1\}.
\]
As a consequence, we have a minimal presentation
\begin{equation}\label{Eq:Minimal presentation of syzbundle}
\bigoplus_{i=1}^{\mu}\cO(-\alpha_{i},-\beta_{i})
\longrightarrow M_{V}
\longrightarrow0
\end{equation}
such that $\alpha_{i}+\beta_{i}\geq1$ for $1\leq i\leq \mu$. Taking exterior powers in (\ref{Eq:Minimal presentation of syzbundle}) we have
\begin{equation}\label{Eq:Minimal presentation wedge syzbundle}
    \bigoplus_{
    1\leq i_{1}<\dotsb<i_{q}\leq \mu
    }
    \cO(
    -\alpha_{i_{1}}-\dotsb-\alpha_{i_{q}},-\beta_{i_{1}}-\dotsb-\beta_{i_{q}}
    )
    \longrightarrow 
    \bigwedge^{q}M_{V}
    \longrightarrow 0.
\end{equation}
Notice that for any $q-$uple $1\leq i_{1}<\dotsc<i_{q}\leq \mu$ it holds that
\[
\alpha_{i_{1}}+\dotsb+\alpha_{i_{q}}+\beta_{i_{1}}+\dotsb+\beta_{i_{q}}\geq q.
\]
Thus, if $\HH^{0}(X,\bigwedge^{q}M_{V}(x,y))\neq0$, then we have that $x+y\geq q$.
\end{proof}
\begin{proposition}\label{Proposition:range 1}
Take $X=\PP^{m}\times\PP^{n}$ and $L=\cO_{X}(a,b)$ a very ample line bundle such that $a\geq b$. Let $\cL_{V}$ be any basepoint-free linear system associated to a vector subspace $V\subset \HH^{0}(X,L)$ with $\dim(V)=r$. If 
\[
\frac{a(m+n)}{\min(m,n)}+1<r\leq \binom{a+m}{m}\binom{b+n}{n},
\]
then the syzygy bundle $M_{V}$ associated to $\cL_{V}$ is $L-$stable.
\end{proposition}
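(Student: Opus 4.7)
The plan is to verify the cohomological stability criterion from Lemma \ref{Lemma:key rephrased}. Concretely, I would fix $0<q<r-1$ and a line bundle $G=\cO_{X}(x,y)$ with $\HH^{0}(X,\bigwedge^{q}M_{V}(x,y))\neq 0$, and then show that the hypothesis forces
\[
bmx+any>\frac{qab(m+n)}{r-1}.
\]
The preparation for this is already supplied by the two preceding observations: the inclusion $\HH^{0}(X,\bigwedge^{q}M_{V}(x,y))\hookrightarrow \HH^{0}(X,(\bigwedge^{q}V)\otimes\cO_{X}(x,y))$ forces $\cO_{X}(x,y)$ to be effective, so $x,y\geq 0$; and the lemma just proved via the minimal presentation of the syzygy module gives $x+y\geq q$.

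Next I would minimize the linear form $bmx+any$ on the polyhedral region $\{x,y\geq 0,\ x+y\geq q\}$. Since the minimum of a linear functional on this region is attained at a vertex of the active constraint $x+y=q$, a direct computation yields
\[
bmx+any\;\geq\; q\cdot\min(bm,an).
\]
So it suffices to establish the inequality $\min(bm,an)>\frac{ab(m+n)}{r-1}$, which no longer depends on $q$.

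This is where the hypothesis $a\geq b$ enters. I would show that $\min(bm,an)\geq b\cdot\min(m,n)$ by a two-case split: if $\min(m,n)=n$, then $bm\geq bn$ and $an\geq bn$ (using $a\geq b$); if $\min(m,n)=m$, then $bm\geq bm$ and $an\geq bm$ follows from $a/b\geq 1\geq m/n$. In either case,
\[
\frac{ab(m+n)}{\min(bm,an)}\;\leq\;\frac{a(m+n)}{\min(m,n)}\;<\;r-1,
\]
the last inequality being the standing assumption. Rearranging gives precisely $\min(bm,an)>\frac{ab(m+n)}{r-1}$, which is what was needed.

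The main obstacle is the step $\min(bm,an)\geq b\min(m,n)$: without the normalization $a\geq b$, this asymmetric comparison between the two factors of the polarization need not hold, and the hypothesis of the proposition—expressed only in terms of $a$ and $\min(m,n)$—would be insufficient to bound the critical quantity $\frac{ab(m+n)}{\min(bm,an)}$. Everything else (effectiveness, the $x+y\geq q$ bound, and the corner-of-the-polytope minimization) is a routine assembly of the preceding lemmas.
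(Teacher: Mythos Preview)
Your proof is correct and follows essentially the same approach as the paper. The only cosmetic difference is that the paper bounds the linear form directly via $bmx+any\geq b\min(m,n)(x+y)$ (comparing coefficients), whereas you phrase this as minimizing at a vertex of the constraint region and then invoking $\min(bm,an)\geq b\min(m,n)$; the two formulations are equivalent.
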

\begin{proof}
We apply Lemma \ref{Lemma:key rephrased}. Let us consider an integer $0<q<r-1$ and $G=\cO_{X}(x,y)$ a line bundle such that $\HH^{0}(X,\bigwedge^{q}M_{V}(x,y))\neq0$. Since $x,y\geq0$ and $a\geq b$, we have that $bmx+any\geq b\min(m,n)(x+y)$. Therefore, it is enough to see that it holds
\[
bnx+amy>\frac{qab(m+n)}{r-1}.
\]

Since $\HH^{0}(X,\bigwedge^{q}M_{V}(x,y))\neq0$ then we have that there is some $q-$uple $(i_{1},\dotsc,i_{q})$ with $1\leq i_{1}<\dotsb<i_{q}\leq \mu$ in (\ref{Eq:Minimal presentation wedge syzbundle}), such that 
\[
(x,y)\in(\alpha_{i_{1}}+\dotsb+\alpha_{i_{q}},\beta_{i_{1}}+\dotsb+\beta_{i_{q}})+\ZZ^{2}_{\geq0}.
\]
Thus, we have that $x+y\geq q$. Consequently,
\[
bmx+any\geq b\min(m,n)q > \frac{qab(m+n)}{r-1}
\]
since we assume that $r>\frac{a(m+n)}{\min(m,n)}+1$.
\end{proof}

The remaining of this section is devoted to show that for any integer $r$ such that
\begin{equation}\label{Eq:Range B}
\frac{a(m+n)}{b\min(m,n)} +1 < r\leq \frac{a(m+n)}{\min(m,n)},
\end{equation}
there is a vector subspace $V\subset \HH^{0}(X,L)$ with $\dim(V)=r$ such that the associated linear system $\cL_{V}$ is base point-free and the corresponding syzygy bundle $M_{V}$ is $L-$stable. 

\begin{notation}\label{Notation:tr}
For any integer $r$ satisfying (\ref{Eq:Range B}), we denote by $t_{r}$ the only integer such that
\[
	\frac{a(m+n)}{t_{r}\min(m,n)}+1<r\leq \frac{a(m+n)}{(t_{r}-1)\min(m,n)}+1.
\]
Notice that it holds $2\leq t_{r}\leq b$.
\end{notation}

The following lemma is in the core of the proof. 

\begin{lemma}\label{Lemma:cas A}
Let $R=\kk[x_{0},\dotsc,x_{m},y_{0},\dotsc,y_{n}]$ be the Cox ring of $X=\PP^{m}\times\PP^{n}$, and let $a\geq b\geq 2$ be two integers. For any integer $r$ such that
\[
\frac{a(m+n)}{b\min(m,n)} +1 < r\leq \frac{a(m+n)}{\min(m,n)},
\]
there is a vector subspace $W\subset R_{(a,b)}$ of dimension
\[
N=\dim W \geq  \frac{a(m+n)}{(t_{r}-1)\min(m,n)}+1,
\]
such that
\begin{itemize}
    \item[i)] $W$ admits a basis of monomials $\{f_{1},\dotsc,f_{N}\}$.
    \item[ii)] We have that
    \[
    \{x_{i}^{a}y_{j}^{b}\mid 0\leq i\leq m,\;0\leq j\leq n\}\subset W.
    \]
    \item[iii)] If $\xi=(g_{1},\dotsc,g_{N})\in\syz(f_{1},\dotsc,f_{N})$ is a syzygy of degree $(a+\alpha,b+\beta)$, then it holds that
    \[
    \alpha+\beta\geq t_{r}.
    \]
\end{itemize}
In particular, condition ii) implies that the linear system $\cL_{W}$ associated to $W$ is base point-free.
\end{lemma}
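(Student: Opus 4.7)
The plan is to build $W$ explicitly as the linear span of a combinatorially chosen monomial set $\{f_{1},\dots,f_{N}\}\subset R_{(a,b)}$ and verify (i)--(iii) directly. The first step is to translate (iii) into a pairwise condition on the $f_{i}$. Since the first syzygies of a monomial ideal are generated over $R$ by the Koszul binomial relations
\[
\xi_{ij}=\frac{\mathrm{lcm}(f_{i},f_{j})}{f_{i}}\,e_{j}-\frac{\mathrm{lcm}(f_{i},f_{j})}{f_{j}}\,e_{i},
\]
each of bidegree $\deg\mathrm{lcm}(f_{i},f_{j})$, and every syzygy is an $R$-linear combination of these (with only componentwise-larger bidegrees), condition (iii) is equivalent to
\[
\bigl|\deg\mathrm{lcm}(f_{i},f_{j})\bigr|\;\geq\;a+b+t_{r}\qquad\text{for all } i\neq j,
\]
where $|(x,y)|:=x+y$. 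Equivalently, writing $(\underline{\alpha}_{i},\underline{\beta}_{i})$ for the bi-exponent of $f_{i}$, the ``lcm-distance'' $\tfrac{1}{2}(|\underline{\alpha}_{i}-\underline{\alpha}_{j}|_{1}+|\underline{\beta}_{i}-\underline{\beta}_{j}|_{1})$ must be at least $t_{r}$, which is the combinatorial form I would work with.

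I would then construct the monomial set in two layers. Assume without loss of generality $\min(m,n)=m$. The first layer is the full set of $(m+1)(n+1)$ corner monomials $x_{i}^{a}y_{j}^{b}$; these are mutually at lcm-distance $\min(a,b)=b\geq t_{r}$, and their inclusion delivers (ii) and basepoint-freeness for free. The second layer adds ``edge'' monomials obtained by sampling along the segments between corners. Concretely, for each triple $(i,i',j)$ with $0\leq i<i'\leq m$ and $0\leq j\leq n$ I would include the equispaced family
\[
x_{i}^{a-k}x_{i'}^{k}\,y_{j}^{b},\qquad k\in\{t_{r},2t_{r},\dots\}\cap[t_{r},a-t_{r}],
\]
and symmetrically attach $y$-edge samples to each $x$-corner. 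Within any such segment successive monomials have lcm-distance exactly $t_{r}$; across two segments sharing no pure endpoint the pure $x$- or $y$-factor already forces lcm-distance $\geq b\geq t_{r}$; and across two segments meeting at a common corner a direct verification using $a\geq b\geq t_{r}$ yields the required separation. The counting step aggregates these contributions and, after elementary estimates using $2\leq t_{r}\leq b$ and $m\leq n$, exceeds the target $\tfrac{a(m+n)}{(t_{r}-1)m}+1$; the slack between ``$t_{r}$'' in the spacing and ``$t_{r}-1$'' in the target absorbs the rounding in the floors and uses that $t_{r}\geq 2$.

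The principal obstacle is the cross-segment verification: two edge monomials lying on distinct segments can a priori share enough variables to produce an unexpectedly small lcm, creating a short syzygy. To prevent this I would anchor the construction in a disciplined way, for instance insisting that all added samples in the $x$-direction branch from a fixed base corner $x_{0}^{a}$ (and symmetrically for $y$), so that any two non-corner monomials either lie on a common segment---and hence are separated by a multiple of $t_{r}$---or differ in a pure $x$- or $y$-component, in which case the distance is automatically at least $b\geq t_{r}$. Once this arrangement is fixed, (i) is immediate because the chosen set consists of pairwise distinct monomials in $R_{(a,b)}$, (ii) is built in, and (iii) reduces to the case analysis described above together with the Koszul reduction of the first paragraph.
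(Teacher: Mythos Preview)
Your reduction of (iii) to the pairwise condition $|\deg\operatorname{lcm}(f_i,f_j)|\geq a+b+t_r$ via the Taylor/Koszul generators of the first syzygies is correct and is a clean way to organize the verification. The paper does not state this reduction explicitly but uses it implicitly.

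However, your construction fails the dimension bound. With spacing $t_r$ along each segment you obtain roughly $a/t_r$ interior points per $x$-segment, whereas the target $N\geq \frac{a(m+n)}{(t_r-1)\min(m,n)}+1$ requires on the order of $a/(t_r-1)$ per segment. Your remark that ``the slack between $t_r$ in the spacing and $t_r-1$ in the target absorbs the rounding'' has the inequality backwards: larger spacing gives \emph{fewer} points, not more. A concrete counterexample: take $m=n=1$, $a=100$, $b=2$, $t_r=2$. The target is $N\geq \tfrac{100\cdot 2}{1\cdot 1}+1=201$. Your $x$-segments (two of them, one for each $j$) contribute at most $2\bigl(\lfloor 100/2\rfloor-1\bigr)=98$ monomials, the $y$-segments contribute nothing since $[t_r,b-t_r]=[2,0]$ is empty, and the four corners bring the total to $102$. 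The $y$-edge contribution cannot rescue this in general: whenever $t_r=2$ and $a\gg b$ the deficit is of order $a$, while the $y$-edges contribute only $O(b)$.

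The device you are missing---and the core of the paper's proof---is a \emph{zigzag in the $y$-direction}. Instead of keeping the second factor equal to $y_j^b$ along a segment, the paper alternates between two adjacent $y$-monomials (e.g.\ $y_0^{b}$ and $y_0^{b-1}y_1$) while stepping the $x$-exponent by only $t_r-1$. Two consecutive monomials then differ by $t_r-1$ in the $x$-part and by $1$ in the $y$-part, so their lcm-distance is exactly $(t_r-1)+1=t_r$; two monomials at distance $2$ in the chain share the same $y$-part and differ by $2(t_r-1)\geq t_r$ in $x$. This packs $\lfloor a/(t_r-1)\rfloor$ points onto each chain, which is precisely what the lower bound on $N$ demands. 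The paper then handles the cases $m=n=1$, $\min(m,n)=1<\max(m,n)$, and $\min(m,n)\geq 2$ separately (the last by restricting to a suitable sub-polynomial ring), with some bookkeeping depending on the parity of $\lfloor a/(t_r-1)\rfloor$ and on whether $2(t_r-1)\leq b$; this case analysis also takes care of the cross-chain lcm checks. Your anchored edge construction could in principle be combined with the zigzag idea, but as stated it does not reach the required cardinality.
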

\begin{proof}
We divide the proof in four cases:
\begin{itemize}
\item[A)] $n=m=1$, and $R=\kk[x_{0},x_{1},y_{0},y_{1}]$.

\item[B)] $m=\min(n,m)=1$ and $n>1$, and $R=\kk[x_{0},\dotsc,x_{m},y_{0},y_{1}]$.

\item[C)] $n=\min(n,m)=1$ and $m>1$, and $R=\kk[x_{0},x_{1},y_{0},\dotsc,y_{n}]$.

\item[D)] Otherwise.
\end{itemize}

Case A). We consider the following two sets of monomials in $R_{(a,b)}$:
\begin{multline*}
    \mathcal{A}=\{
    x_{0}^{a-(t_{r}-1)}x_{1}^{t_{r}-1}y_{0}^{b-1}y_{1},
    x_{0}^{a-2(t_{r}-1)}x_{1}^{2(t_{r}-1)}y_{0}^{b},
    \dotsc,\\
    x_{0}^{a-(\lfloor\frac{a}{t_{r}-1}\rfloor-2)(t_{r}-1)}
    x_{1}^{(\lfloor\frac{a}{t_{r}-1}\rfloor-2)(t_{r}-1)}
    y_{0}^{b-(1-\epsilon)}y_{1}^{1-\epsilon},\\
    x_{0}^{a-(\lfloor\frac{a}{t_{r}-1}\rfloor -1)(t_{r}-1)}
    x_{1}^{(\lfloor\frac{a}{t_{r}-1}\rfloor -1)(t_{r}-1)}
    y_{0}^{b-\epsilon}y_{1}^{\epsilon}
    \},
\end{multline*}
and
\begin{multline*}
    \mathcal{B}=\{
    x_{0}^{a-1}x_{1}y_{0}^{b-(t_{r}-1)}y_{1}^{t_{r}-1},
    x_{0}^{a-t_{r}}x_{1}^{t_{r}}y_{0}^{b-t_{r}}y_{1}^{t_{r}},
    x_{0}^{a-2t_{r}+1}x_{1}^{2t_{r}-1}y_{0}^{b-(t_{r}-1)}y_{1}^{t_{r}-1},
    \dotsc,\\
    x_{0}^{a-1-(\lfloor\frac{a}{t_{r}-1}\rfloor-2)(t_{r}-1)}
    x_{1}^{1+(\lfloor\frac{a}{t_{r}-1}\rfloor-2)(t_{r}-1)}
    y_{0}^{b-t_{r}+\epsilon}y_{1}^{t_{r}-\epsilon},\\
    x_{0}^{a-1-(\lfloor\frac{a}{t_{r}-1}\rfloor -1)(t_{r}-1)}
    x_{1}^{1+(\lfloor\frac{a}{t_{r}-1}\rfloor -1)(t_{r}-1)}
    y_{0}^{b+t_{r}-1-\epsilon}y_{1}^{t_{r}-1+\epsilon}
    %x_{0}^{a-1-\lfloor\frac{a}{t_{r}-1}\rfloor(t_{r}-1)}
    %x_{1}^{1+\lfloor\frac{a}{t_{r}-1}\rfloor(t_{r}-1)} %Aquest no el posem si és divisible
    %y_{0}^{b+t_{r}-1-\epsilon}y_{1}^{t_{r}-1+\epsilon}
    \},
\end{multline*}
where
\[
\epsilon=\epsilon(a,t_{r}):=\left\{
\begin{array}{cc}
    0 & \text{if}\quad\lfloor \frac{a}{t_{r}-1}\rfloor\quad \text{even} \\
    1 & \text{if}\quad\lfloor \frac{a}{t_{r}-1}\rfloor\quad \text{odd}.
\end{array}
\right.
\]
We have that
\begin{align*}
    &|\cA|=\lfloor\frac{a}{t_{r}-1}\rfloor-1\\
    &|\cB|=\lfloor\frac{a}{t_{r}-1}\rfloor
\end{align*}
We construct the vector space $W$ from the sets of monomials $\cA$ and $\cB$, fulfilling conditions i), ii) and iii). We consider two subcases: 

Case i): if $2(t_{r}-1)\leq b$.

If either it holds $\frac{a}{t_{r}-1}\notin\ZZ$ or $\frac{a}{t_{r}-1}\in\ZZ$ is odd, we define
\[
W=\langle\cA\cup\cB\rangle\oplus\langle x_{i}^{a}y_{j}^{b}\mid 0\leq i,j\leq 1\rangle.
\]
We have $\dim W=2\lfloor\frac{a}{t_{r}-1}\rfloor + 3$. Otherwise, $\frac{a}{t_{r}-1}\in\ZZ$ is even, and we define
\[
W=\langle(\cA\setminus\{x_{0}^{t_{r}-1}
    x_{1}^{a-(t_{r}-1)}
    y_{0}^{b}\})\cup\cB\rangle\oplus\langle x_{i}^{a}y_{j}^{b}\mid 0\leq i,j\leq 1\rangle.
\]
We have $\dim W=2\lfloor\frac{a}{t_{r}-1}\rfloor + 2$.

Case ii): if $t_{r}\leq b< 2(t_{r}-1)$. 

If either it holds $\frac{a}{t_{r}-1}\notin\ZZ$ or $\frac{a}{t_{r}-1}\in\ZZ$ is odd, we define
\[
W=\langle\cA\cup(\cB\setminus \{ x_{0}^{a-1}x_{1}y_{0}^{b-(t_{r}-1)}y_{1}^{t_{r}-1}\})\rangle\oplus
\langle x_{i}^{a}y_{j}^{b}\mid 0\leq i,j\leq 1\rangle.
\]
We have $\dim W=2\lfloor\frac{a}{t_{r}-1}\rfloor + 2$. Otherwise, $\frac{a}{t_{r}-1}\in\ZZ$ is even, and we define
\[
W=\langle(\cA\setminus\{x_{0}^{t_{r}-1}
    x_{1}^{a-(t_{r}-1)}
    y_{0}^{b}\})\cup(\cB\setminus \{ x_{0}^{a-1}x_{1}y_{0}^{b-(t_{r}-1)}y_{1}^{t_{r}-1}\})\rangle\oplus\langle x_{i}^{a}y_{j}^{b}\mid 0\leq i,j\leq 1\rangle.
\]
We have $\dim W=2\lfloor\frac{a}{t_{r}-1}\rfloor + 1$.

In any case, we have that $\dim W\geq \frac{2a}{t_{r}-1}+1$ and conditions i), ii) and iii) hold.

Case B). We consider the following sets of monomials in $R_{(a,b)}$ for each $1\leq i\leq m-1$:
\begin{multline*}
    \mathcal{A}_{i}=\{
    x_{i-1}^{a-(t_{r}-1)}
    x_{i}^{t_{r}-1}
    y_{0}^{b-1}
    y_{1},
    x_{i-1}^{a-2(t_{r}-1)}
    x_{i}^{2(t_{r}-1)}
    y_{0}^{b},
    \dotsc,\\
    x_{i-1}^{a-(\lfloor\frac{a}{t_{r}-1}\rfloor-2)(t_{r}-1)}
    x_{i}^{(\lfloor\frac{a}{t_{r}-1}\rfloor-2)(t_{r}-1)}
    y_{0}^{b-(1-\epsilon)}
    y_{1}^{1-\epsilon},\\
    x_{i-1}^{a-(\lfloor\frac{a}{t_{r}-1}\rfloor -1)(t_{r}-1)}
    x_{i}^{(\lfloor\frac{a}{t_{r}-1}\rfloor -1)(t_{r}-1)}
    y_{0}^{b-\epsilon}
    y_{1}^{\epsilon}
    \},
\end{multline*}
and
\begin{multline*}
    \mathcal{B}_{i}=\{
    x_{i}^{a-(t_{r}-1)}
    x_{i+1}^{t_{r}-1}
    y_{0}^{b-2}
    y_{1}^{2},
    x_{i}^{a-2(t_{r}-1)}
    x_{i+1}^{2(t_{r}-1)}
    y_{0}^{b-1}y_{1},
    \dotsc,\\
    x_{i}^{a-(\lfloor\frac{a}{t_{r}-1}\rfloor-2)(t_{r}-1)}
    x_{i+1}^{(\lfloor\frac{a}{t_{r}-1}\rfloor-2)(t_{r}-1)}
    y_{0}^{b-(2-\epsilon)}
    y_{1}^{2-\epsilon},\\
    x_{i}^{a-(\lfloor\frac{a}{t_{r}-1}\rfloor -1)(t_{r}-1)}
    x_{i+1}^{(\lfloor\frac{a}{t_{r}-1}\rfloor -1)(t_{r}-1)}
    y_{0}^{b-(1+\epsilon)}
    y_{1}^{1+\epsilon}
    \},
\end{multline*}

If $\frac{a}{t_{r}-1}\notin\ZZ$ or $\frac{a}{t_{r}-1}\in\ZZ$ and it is odd, we define
\[
W=\bigoplus_{i=1}^{m}\langle\cA_{i}\cup\cB_{i}\rangle\oplus
\langle x_{i}^{a}y_{j}^{b}\mid 0\leq i\leq m,\;0\leq j\leq 1\rangle,
\]
and we have
\[
\dim W = 2(m-1)(\lfloor\frac{a}{t_{r}-1}\rfloor-1)+2(m+1).
\]

Otherwise, $\frac{a}{t_{r}-1}\in\ZZ$ and it is even, and we define
\[
W=\bigoplus_{i=1}^{m}\langle(\cA_{i}
\setminus 
\{
x_{i-1}^{t_{r}-1}
    x_{i}^{a-(t_{r}-1)}
    y_{0}^{b}
\}
)\cup\cB_{i}\rangle\oplus
\langle x_{i}^{a}y_{j}^{b}\mid 0\leq i\leq m,\;0\leq j\leq 1\rangle,
\]
and we have
\[
\dim W = (m-1)(\lfloor\frac{a}{t_{r}-1}\rfloor-2+\lfloor\frac{a}{t_{r}-1}\rfloor-1)+2(m+1).
\]
In any case, we have $\dim W \geq \frac{(m+1)a}{t_{r}-1}+1$ and conditions i), ii) and iii) hold.

Case C). We consider the following sets of monomials in $R_{(a,b)}$ for each $1\leq i\leq n-1$:
\begin{multline*}
    \mathcal{A}_{i}=\{
    x_{0}^{a-(t_{r}-1)}
    x_{1}^{t_{r}-1}
    y_{i}^{b-1}
    y_{i+1},
    x_{0}^{a-2(t_{r}-1)}
    x_{1}^{2(t_{r}-1)}
    y_{i}^{b},
    \dotsc,\\
    x_{0}^{a-(\lfloor\frac{a}{t_{r}-1}\rfloor-2)(t_{r}-1)}
    x_{1}^{(\lfloor\frac{a}{t_{r}-1}\rfloor-2)(t_{r}-1)}
    y_{i}^{b-(1-\epsilon)}
    y_{i+1}^{1-\epsilon},\\
    x_{0}^{a-(\lfloor\frac{a}{t_{r}-1}\rfloor -1)(t_{r}-1)}
    x_{1}^{(\lfloor\frac{a}{t_{r}-1}\rfloor -1)(t_{r}-1)}
    y_{i}^{b-\epsilon}
    y_{i+1}^{\epsilon}
    \},
\end{multline*}
and for $i=n$:
\begin{multline*}
    \mathcal{A}_{n}=\{
    x_{0}^{a-(t_{r}-1)}
    x_{1}^{t_{r}-1}
    y_{n}^{b-1}
    y_{0},
    x_{0}^{a-2(t_{r}-1)}
    x_{1}^{2(t_{r}-1)}
    y_{n}^{b},
    \dotsc,\\
    x_{0}^{a-(\lfloor\frac{a}{t_{r}-1}\rfloor-2)(t_{r}-1)}
    x_{1}^{(\lfloor\frac{a}{t_{r}-1}\rfloor-2)(t_{r}-1)}
    y_{n}^{b-(1-\epsilon)}
    y_{0}^{1-\epsilon},\\
    x_{0}^{a-(\lfloor\frac{a}{t_{r}-1}\rfloor -1)(t_{r}-1)}
    x_{1}^{(\lfloor\frac{a}{t_{r}-1}\rfloor -1)(t_{r}-1)}
    y_{n}^{b-\epsilon}
    y_{0}^{\epsilon}
    \},
\end{multline*}

If $\frac{a}{t_{r}-1}\notin\ZZ$ or $\frac{a}{t_{r}-1}\in\ZZ$ and it is odd, we define
\[
W=\bigoplus_{i=1}^{n}\langle\cA_{i}\rangle\oplus
\langle x_{i}^{a}y_{j}^{b}\mid 0\leq i\leq 1,\;0\leq j\leq n\rangle,
\]
and we have
\[
\dim W = (n+1)(\lfloor\frac{a}{t_{r}-1}\rfloor-1)+2(n+1).
\]

Otherwise, $\frac{a}{t_{r}-1}\in\ZZ$ and it is even, and we define
\[
W=\bigoplus_{i=1}^{n}\langle\cA_{i}
\setminus 
\{
x_{0}^{t_{r}-1}
    x_{1}^{a-(t_{r}-1)}
    y_{i}^{b}
\}
\rangle\oplus
\langle x_{i}^{a}y_{j}^{b}\mid 0\leq i\leq n,\;0\leq j\leq 1\rangle,
\]
and we have
\[
\dim W = (n+1)(\lfloor\frac{a}{t_{r}-1}\rfloor-2)+2(n+1).
\]
In any case, we have $\dim W \geq \frac{(n+1)a}{t_{r}-1}+1$ and conditions i), ii) and iii) hold.

Case D).
Let us consider now the vector space $W$ constructed in Case B) (respectively Case C)) taking the subring $R'=\kk[x_{0},\dotsc,x_{m},y_{0},y_{1}]\subset R$, if $m=\max(m,n)$ (respectively taking the subring $R'=\kk[x_{0},x_{1},y_{0},\dotsc,y_{n}]$, if $n=\max(m,n)$). Thus, $W$ is also a vector subspace in $R_{(a,b)}$ and conditions $i)$, $ii)$ and $iii)$ are automatically satisfied. On the other hand, we have that
\[
\begin{array}{rc>{\displaystyle}l}
\dim W
&\geq &
\frac{a(\max(m,n)+1)}{t_{r}-1}+1\\[2mm]
&=&
\frac{\max(m,n)a}{t_{r}-1}+\frac{a}{t_{r}-1}+1\\[2mm]
&\geq&
\frac{\max(m,n)a}{\min(m,n)(t_{r}-1)}+\frac{a}{t_{r}-1}+1\\[2mm]
&=&
\frac{(m+n)a}{\min(m,n)(t_{r}-1)}+1.
\end{array}
\]
\end{proof}

\begin{proposition}\label{Proposition:range 2}
Take $X=\PP^{m}\times\PP^{n}$ and $L=\cO_{X}(a,b)$ a very ample line bundle, such that $a\geq b\geq 2$. For any integer
\[
\frac{a(m+n)}{b\min(m,n)}+1<r\leq \frac{a(m+n)}{\min(m,n)}+1,
\]
let $\cL_{V}$ be general base point-free linear system associated to an $r-$dimensional vector space $V\subset\HH^{0}(X,L)$. Then, the syzygy bundle $M_{V}$ corresponding to $\cL_{V}$ is $L-$stable.
\end{proposition}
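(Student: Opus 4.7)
My plan is to verify the cohomological stability criterion of Lemma \ref{Lemma:key rephrased} for one explicit $V$ and then invoke openness of stability.

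First I would apply Lemma \ref{Lemma:cas A} to produce a monomial subspace $W\subset R_{(a,b)}$ of dimension $N\geq \frac{a(m+n)}{(t_{r}-1)\min(m,n)}+1\geq r$ which is base point-free and such that every syzygy of its monomial basis has bidegree $(a+\alpha,b+\beta)$ with $\alpha+\beta\geq t_{r}$. I would then pick $V\subset W$ an $r$-dimensional base point-free subspace, which is possible whenever any base point-free $r$-dimensional linear system in $\HH^{0}(X,L)$ exists, e.g.\ by applying Bertini's theorem to the base point-free $W$.

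The key observation will be that the inclusion $V\hookrightarrow W$ induces an injective degree-preserving map $\syz(V)\hookrightarrow \syz(W)$: if $V=\langle g_{1},\dotsc,g_{r}\rangle$ with $g_{i}=\sum_{j}c_{ij}f_{j}$ (where $c_{ij}\in\kk$ and $\{f_{j}\}$ is the monomial basis of $W$), then a syzygy $(h_{1},\dotsc,h_{r})$ of $(g_{1},\dotsc,g_{r})$ maps to the nonzero syzygy $\bigl(\sum_{i}h_{i}c_{ij}\bigr)_{j}$ of $(f_{1},\dotsc,f_{N})$ in the same bidegree. Hence every nonzero syzygy of $V$ has bidegree $(a+\alpha,b+\beta)$ with $\alpha+\beta\geq t_{r}$, and via $M_{V}(-L)\cong\widetilde{K_{V}}$ the minimal presentation
\[
\bigoplus_{i=1}^{\mu}\cO_{X}(-\alpha_{i},-\beta_{i})\longrightarrow M_{V}\longrightarrow 0
\]
has all $\alpha_{i}+\beta_{i}\geq t_{r}$. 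Taking $q$th exterior powers, as in (\ref{Eq:Minimal presentation wedge syzbundle}), yields a surjection onto $\bigwedge^{q}M_{V}$ whose summands satisfy $\sum_{j}(\alpha_{i_{j}}+\beta_{i_{j}})\geq qt_{r}$. Therefore, if $\HH^{0}(X,\bigwedge^{q}M_{V}(x,y))\neq 0$ for some $0<q<r-1$ and some line bundle $\cO_{X}(x,y)$, then $x,y\geq 0$ and $x+y\geq qt_{r}$.

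To close the argument, using $a\geq b$ and $x,y\geq 0$ I would estimate
\[
bmx+any\geq b\min(m,n)(x+y)\geq b\min(m,n)\cdot qt_{r},
\]
and the defining inequality of $t_{r}$ in Notation \ref{Notation:tr} rearranges precisely to $b\min(m,n)\cdot qt_{r}>\frac{qab(m+n)}{r-1}$. Lemma \ref{Lemma:key rephrased} then delivers $L$-stability of $M_{V}$ for the chosen $V$; since base point-freeness and $L$-stability are both Zariski-open conditions on the irreducible Grassmannian $\mathrm{Grass}(r,\HH^{0}(X,L))$, the general base point-free $\cL_{V}$ of dimension $r$ inherits $L$-stability. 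The main technical obstacle I foresee is the first step, namely selecting a base point-free $V\subset W$ of dimension exactly $r$: when $r\geq (m+1)(n+1)$ one simply includes every $x_{i}^{a}y_{j}^{b}$ in the monomial basis of $V$; for smaller $r$ one must exploit the explicit monomial families $\mathcal{A}$, $\mathcal{B}$, $\mathcal{A}_{i}$, $\mathcal{B}_{i}$ from the case-by-case construction of Lemma \ref{Lemma:cas A} to extract a minimal base point-free collection inside $W$ and complete it to dimension $r$.
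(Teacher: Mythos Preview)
Your proposal is correct and follows essentially the same approach as the paper: invoke Lemma~\ref{Lemma:cas A}, choose an $r$-dimensional base point-free $V\subset W$, transfer the syzygy bidegree bound $\alpha+\beta\geq t_r$ via the inclusion $\syz(V)\hookrightarrow\syz(W)$, pass to exterior powers, and conclude by Lemma~\ref{Lemma:key rephrased}. The only variation is that the paper picks $V$ explicitly as the span of all the monomials $x_i^a y_j^b$ together with $r-(m+1)(n+1)$ further monomials from the basis of $W$ (so base point-freeness is automatic and the syzygy inclusion is simply extension by zeros), whereas you allow a general $V\subset W$ and justify the syzygy inclusion via the full-rank coefficient matrix $(c_{ij})$.
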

\begin{proof}
By Remark \ref{Remark:stability open property} it is enough to find an $r-$dimensional vector space $V\subset \HH^{0}(X,L)$ such that the linear system $\cL_{V}$ is base point-free and its corresponding syzygy bundle $M_{V}$ is $L-$stable. To this end we use Lemma \ref{Lemma:cas A}.

By Notation \ref{Notation:tr}, let us consider the integer $2\leq t_{r}\leq b$ such that
\[
\frac{a(m+n)}{t_{r}\min(m,n)}+1<r\leq \frac{a(m+n)}{(t_{r}-1)\min(m,n)}+1.
\]
We consider the $N-$dimensional vector subspace $W$ given by Lemma \ref{Lemma:cas A}. We may write
\[
W=\langle f_{1},\dotsc,f_{N}\rangle=\langle 
x_{i}^{a}y_{j}^{b}\mid 0\leq i\leq m,\;0\leq j\leq n
\rangle+
\langle
m_{1},\dotsc,m_{N-(m+1)(n+1)}
\rangle,
\]
where $m_{i}$ is a monomial for any $1\leq i\leq N-(m+1)(n+1)$. Then we construct the following $r-$dimensional vector space
\[
V=\langle f_{1},\dotsc,f_{r}\rangle=\langle 
x_{i}^{a}y_{j}^{b}\mid 0\leq i\leq m,\;0\leq j\leq n
\rangle+
\langle
m_{1},\dotsc,m_{r-(m+1)(n+1)}
\rangle.
\]
Notice that $\syz(f_{1},\dotsc,f_{r})\subset \syz(f_{1},\dotsc,f_{N})$. Thus, applying condition $ii)$ of Lemma \ref{Lemma:cas A} to any syzygy $\xi=(g_{1},\dotsc,g_{r})\in\syz(V)$ of degree $(a+\alpha,b+\beta)$, we have that
\[
\alpha+\beta\geq t_{r}.
\]
In particular, if $K_{V}:=\syz(f_{1},\dotsc,f_{r})$, we have the following minimal presentation
\[
\bigoplus_{i=1}^{\mu}R(-a-\alpha_{i},-b-\beta_{i})\rightarrow K_{V}
\longrightarrow0,
\]
such that $\alpha_{i}+\beta_{i}\geq t_{r}$ for any $1\leq i\leq \mu$.

On the other hand, since $M_{V}=\widetilde{K_{V}}$, we have the following minimal presentation
\[
\bigoplus_{i=1}^{\mu}\cO_{X}(-\alpha_{i},-\beta_{i})\longrightarrow M_{V}\longrightarrow 0,
\]
which yields, taking the $q-$th exterior power, the minimal presentation of $\bigwedge^{q}M_{V}$:
\[
\bigoplus_{1\leq i_{1}<\dotsb<i_{q}\leq \mu}\cO_{X}(
-\alpha_{i_{1}}-\dotsb-\alpha_{i_{q}},-\beta_{i_{1}}-\dotsb-\beta_{i_{q}})
\longrightarrow \bigwedge^{q}M_{V}\longrightarrow 0.
\]
For any $q-$uple $1\leq i_{1}<\dotsb<i_{q}\leq \mu$, we have that
\begin{equation}\label{Eq:bound cas A}
-\alpha_{i_{1}}-\dotsb-\alpha_{i_{q}}-\beta_{i_{1}}-\dotsb-\beta_{i_{q}}\geq qt_{r}.
\end{equation}
Now, we apply Lemma \ref{key}. Let us consider an integer $0<q<r-1$ and a line bundle $G=\cO_{X}(x,y)$ such that 
\[
\HH^{0}(X,\bigwedge^{q}M_{V}(x,y))\neq 0.
\]
We want to see that it holds
\[
bmx+any>\frac{qab(m+n)}{r-1}.
\]
Since $a\geq b$, we have that $bmx+any\geq b\min(m,n)(x+y)$. On the other hand, by (\ref{Eq:bound cas A}), assuming that $\HH^{0}(X,\bigwedge^{q}M_{V}(x,y))\neq0$ we have that $x+y\geq qt$. Consequently, we obtain that
\[
bmx+any\geq b\min(m,n)(x+y)\geq bqt_{r} >\frac{qab(m+n)}{r-1}
\]
since we assume that 
\[
r>\frac{a(m+n)}{t_{r}\min(m,n)}+1.
\]
\end{proof}

Propositions \ref{Proposition:range 1} and \ref{Proposition:range 2} yield the main result of this note.
\begin{theorem}\label{Theorem:main}
Let $X=\PP^{m}\times\PP^{n}$ and $a,b\geq1$ two integers and let $L:=\cO_{X}(a,b)$ be a very ample line bundle on $X$. For any integer $r$ such that
\[
\frac{\max(a,b)(m+n)}{\min(a,b)\min(m,n)}+1<r\leq \binom{m+a}{m}\binom{n+b}{n}=\hh^{0}(X,L),
\]
let $\cL_{V}$ be a general base point-free linear system associated to an $r-$dimensional vector space $V\subset \HH^{0}(X,L)$. Then, the syzygy bundle $M_V$ corresponding to $\cL_{V}$ is $L-$stable.
\end{theorem}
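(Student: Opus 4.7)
The plan is simply to glue Propositions \ref{Proposition:range 1} and \ref{Proposition:range 2} together after a symmetry reduction. First, I would observe that the isomorphism $\PP^{m}\times\PP^{n}\cong\PP^{n}\times\PP^{m}$ swapping the two factors sends the polarization $\cO_{X}(a,b)$ to $\cO_{X}(b,a)$, exchanges the roles of $(a,m)$ and $(b,n)$, and carries a base point-free linear system $\cL_{V}\subset |\cO_{X}(a,b)|$ to one in $|\cO_{X}(b,a)|$ with an isomorphic syzygy bundle. Since $L$-stability is preserved under pullback by isomorphisms, this reduction allows me to assume without loss of generality that $a\geq b$. Under this assumption $\max(a,b)=a$ and $\min(a,b)=b$, so the hypothesis on $r$ in the theorem becomes
\[
\frac{a(m+n)}{b\min(m,n)}+1 \;<\; r \;\leq\; \binom{m+a}{m}\binom{n+b}{n}.
\]

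Next, I would split this interval at the threshold
\[
r_{0} \;:=\; \frac{a(m+n)}{\min(m,n)}+1,
\]
which is precisely where the two propositions meet. If $r_{0}<r\leq \hh^{0}(X,L)$, then Proposition \ref{Proposition:range 1} (which requires only $a\geq b\geq 1$) directly gives the $L$-stability of $M_{V}$ for a general $V$ of dimension $r$. If instead $\frac{a(m+n)}{b\min(m,n)}+1<r\leq r_{0}$, then I apply Proposition \ref{Proposition:range 2} to obtain the same conclusion.

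The only subtlety is the edge case $b=1$: there Proposition \ref{Proposition:range 2} is not available (it requires $b\geq 2$), but the second subinterval is automatically empty because $\frac{a(m+n)}{b\min(m,n)}+1=r_{0}$, so the entire range is absorbed by Proposition \ref{Proposition:range 1}. Hence no value of $r$ in the theorem's range is left uncovered. I expect no real obstacle here beyond the bookkeeping to verify that the endpoints of the two subintervals match up exactly and that the symmetry reduction is legitimate; all the genuine work is already contained in Propositions \ref{Proposition:range 1} and \ref{Proposition:range 2}.
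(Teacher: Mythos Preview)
Your proposal is correct and follows essentially the same route as the paper: after the symmetry reduction to $a\geq b$, you split the range of $r$ at $r_{0}=\frac{a(m+n)}{\min(m,n)}+1$, invoking Proposition~\ref{Proposition:range 1} above $r_{0}$ and Proposition~\ref{Proposition:range 2} below, and you handle $b=1$ by noting the lower subinterval is empty---exactly as the paper does.
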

\begin{proof}
Exchanging the role of $m$ and $n$ in $\PP^{m}\times\PP^{n}$ if necessary, we may assume that $a\geq b$. On the other hand, if we assume $b=1$, then we have
\[
\frac{a(m+n)}{b\min(m,n)}+1=\frac{a(m+n)}{\min(m,n)}+1.
\]
Therefore, the result follows directly from Proposition \ref{Proposition:range 1}. 

On the other hand, assume that $a\geq b\geq2$.
Then, the result follows from Proposition \ref{Proposition:range 1} if
\[
\frac{a(m+n)}{\min(m,n)}+1<r\leq \binom{m+a}{m}\binom{n+b}{n},
\]
or from Proposition \ref{Proposition:range 2} if
\[
\frac{a(m+n)}{b\min(m,n)}+1<r\leq \frac{a(m+n)}{\min(m,n)}+1.
\]
\end{proof}

We have the following remark:

\begin{remark}\label{Remark:Gaps}\rm
(i) Let $V\subset \HH^{0}(X,\cO_{X}(L))$ be a vector space corresponding to a base point-free linear system $\cL_{V}$, then we have that
\[
\dim X=m+n < \dim V \leq \binom{m+a}{m}\binom{n+b}{n}=\hh^{0}(X,L).
\]
By Theorem \ref{Theorem:main}, we have seen that when $V$ is general and
\[
\frac{\max(a,b)(m+n)}{\min(a,b)\min(m,n)}+1<\dim V\leq \binom{m+a}{m}\binom{n+b}{n}=\hh^{0}(X,L),
\]
then, the corresponding syzygy bundle $M_{V}$ is $L-$stable. However, to solve completely Question \ref{Question:Brenner}, it remains open the case of a general vector space $V$ such that
\begin{equation}\label{Eq:Gap}
m+n+1\leq \dim V\leq \frac{\max(a,b)(m+n)}{\min(a,b)\min(m,n)}+1.
\end{equation}

(ii) On the other hand, if we assume in addition that $V$ is generated by monomials, then being $\cL_{V}$ a base point-free linear system implies that
\[
(m+1)(n+1)\leq \dim V\leq \binom{m+a}{m}\binom{n+b}{n}.
\]
Therefore, Question \ref{Question:Brenner} even when restricted to vector spaces $V$ generated by monomials remains open when
\begin{equation}\label{Eq:Gap monomials}
(m+1)(n+1)\leq \dim V \leq \frac{\max(a,b)(m+n)}{\min(a,b)\min(m,n)}+1.
\end{equation}
However, we notice that the open range of cases expressed in (\ref{Eq:Gap monomials}) is smaller than that of (\ref{Eq:Gap}).
\end{remark}

The following corollaries shows that in some cases Theorem \ref{Theorem:main} solves already Question \ref{Question:Brenner} for $\PP^{m}\times \PP^{n}$.

\begin{corollary}
Let $X=\PP^{m}\times\PP^{n}$ and $a,b\geq1$ two integers and let $L:=\cO_{X}(a,b)$ be a very ample line bundle on $X$. If
\[
\max(a,b) < \min(m,n)\min(a,b),
\]
then the syzygy bundle $M_{V}$ corresponding to a general basepoint-free linear system $\cL_{V}$, is $L-$stable.

In particular, if $m,n\geq2$, for any integer $t\geq1$ we set $H_{t}:=\cO_{X}(t,t)$. Then, the syzygy bundle $M_{V}$ associated to a general basepoint-free linear system $\cL_{V}$ with $V\subset\HH^{0}(X,H_{t})$ is $H_{t}-$stable. Notice that then, $M_{V}$ is also $H_{1}-$stable.
\end{corollary}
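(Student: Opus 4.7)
The plan is to deduce the corollary directly from Theorem \ref{Theorem:main} by showing that the numerical gap of Remark \ref{Remark:Gaps}(i) is empty under the stated hypothesis. First I would rewrite the assumption $\max(a,b) < \min(m,n)\min(a,b)$ as
\[
\frac{\max(a,b)}{\min(a,b)\min(m,n)} < 1,
\]
which upon multiplying by $m+n$ gives
\[
\frac{\max(a,b)(m+n)}{\min(a,b)\min(m,n)} + 1 < m+n+1.
\]
Hence the lower threshold appearing in Theorem \ref{Theorem:main} is strictly below $m+n+1$.

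Next I would note that any basepoint-free linear system $\cL_V$ on $X=\PP^m\times\PP^n$ satisfies $\dim V \geq m+n+1$: since $L=\cO_X(a,b)$ is ample, the induced morphism $\phi_V\colon X\to\PP(V^\ast)$ is finite (its pullback of $\cO(1)$ equals the ample $L$), so its image has dimension $m+n$, forcing $\dim V-1\geq m+n$. Combining this with the previous inequality,
\[
\dim V \;\geq\; m+n+1 \;>\; \frac{\max(a,b)(m+n)}{\min(a,b)\min(m,n)}+1,
\]
so every basepoint-free $\cL_V$ on $X$ falls inside the range to which Theorem \ref{Theorem:main} applies. Therefore a general such $\cL_V$ has $L$-stable syzygy bundle $M_V$.

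For the particular statement with $a=b=t$, the hypothesis $\max(a,b)<\min(m,n)\min(a,b)$ reads $t<t\min(m,n)$, which is equivalent to $\min(m,n)\geq 2$, i.e.~$m,n\geq 2$. Applying the first part then yields $H_t$-stability of a general $M_V\subset \HH^0(X,H_t)$. To upgrade this to $H_1$-stability I would simply use $H_t=tH_1$, so that for any coherent subsheaf $F\subset M_V$ one has
\[
\mu_{H_t}(F) \;=\; \frac{c_1(F)\cdot (tH_1)^{m+n-1}}{\rk(F)} \;=\; t^{m+n-1}\,\mu_{H_1}(F),
\]
and similarly for $M_V$. Hence $\mu_{H_t}(F)<\mu_{H_t}(M_V)$ if and only if $\mu_{H_1}(F)<\mu_{H_1}(M_V)$, so the two notions of slope stability coincide on $X$ and $M_V$ is automatically $H_1$-stable.

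The only delicate step is the elementary dimension count $\dim V\geq m+n+1$ for basepoint-free systems, needed to align Theorem \ref{Theorem:main}'s lower bound with the geometric one; everything else is bookkeeping, and the scaling argument for the polarization $H_t=tH_1$ is immediate because $\Pic(X)$-wise we are moving along a ray.
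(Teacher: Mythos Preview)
Your proof is correct and follows the same approach as the paper: show that the hypothesis forces the lower bound of Theorem \ref{Theorem:main} below $m+n+1$, so that the gap interval of Remark \ref{Remark:Gaps}(i) is empty and every basepoint-free $\cL_V$ lies in the range covered by the theorem. You add the justification for $\dim V\geq m+n+1$ via finiteness of $\phi_V$ and spell out the scaling argument for $H_1$-stability, both of which the paper leaves implicit.
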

\begin{proof}
In this case, we have
\[
m+n+1>\frac{\max(a,b)(m+n)}{\min(a,b)\min(m,n)}+1.
\]
Since there is no base point-free linear system associated to a vector space $V$ satisfying (\ref{Eq:Gap}), the result follows from Theorem \ref{Theorem:main}.
\end{proof}

\begin{corollary}\label{Corollary:PnxPm}
Let $X=\PP^{m}\times\PP^{n}$ and $a,b\geq1$ two integers and let $L:=\cO_{X}(a,b)$ be a very ample line bundle on $X$. If
\[
\max(a,b) < \frac{(mn+m+n)\min(m,n)}{m+n}\min(a,b),
\]
then the syzygy bundle $M_{V}$ corresponding to a general base point-free linear system $\cL_{V}$ given by a general vector space $V$ generated by monomials, is $L-$stable.
\end{corollary}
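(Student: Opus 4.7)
The plan is to reduce the corollary to an immediate consequence of Theorem \ref{Theorem:main} via Remark \ref{Remark:Gaps}(ii). The strategy is to show that the hypothesis forces any monomial-generated basepoint-free $V\subset\HH^{0}(X,L)$ to have dimension strictly exceeding $\frac{\max(a,b)(m+n)}{\min(a,b)\min(m,n)}+1$, so that the stability conclusion of Theorem \ref{Theorem:main} is directly available.

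First I would recall the lower bound from Remark \ref{Remark:Gaps}(ii): if $V\subset R_{(a,b)}$ is generated by monomials and $\cL_{V}$ is basepoint-free, then $\dim V\geq(m+1)(n+1)$. The justification is that at each torus-fixed point $p_{ij}=([e_{i}],[e_{j}])\in\PP^{m}\times\PP^{n}$, the unique monomial of bidegree $(a,b)$ not vanishing at $p_{ij}$ is $x_{i}^{a}y_{j}^{b}$; hence basepoint-freeness forces $\{x_{i}^{a}y_{j}^{b}:0\leq i\leq m,\;0\leq j\leq n\}\subset V$, giving $(m+1)(n+1)=mn+m+n+1$ independent monomials.

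Next I would verify the algebraic equivalence between the hypothesis
\[
\max(a,b)<\frac{(mn+m+n)\min(m,n)}{m+n}\min(a,b)
\]
and the inequality
\[
(m+1)(n+1)>\frac{\max(a,b)(m+n)}{\min(a,b)\min(m,n)}+1.
\]
This is a routine rearrangement: clearing denominators in the hypothesis gives $\max(a,b)(m+n)<(mn+m+n)\min(m,n)\min(a,b)$, which upon dividing by $\min(a,b)\min(m,n)$ and using the identity $(m+1)(n+1)=mn+m+n+1$ yields the displayed inequality. Combining with the previous paragraph, every monomial-generated basepoint-free $V$ satisfies
\[
\dim V\geq(m+1)(n+1)>\frac{\max(a,b)(m+n)}{\min(a,b)\min(m,n)}+1,
\]
so $\dim V$ lies in the range for which Theorem \ref{Theorem:main} guarantees $L$-stability of the general $M_{V}$, and we are done.

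No step here is a serious obstacle; the whole content is the elementary algebraic equivalence of the two inequalities together with the standard observation about torus-fixed points. The only interpretive subtlety is the phrase ``general vector space $V$ generated by monomials'': since for fixed dimension the monomial-generated subspaces form a discrete finite set, ``general'' should be read as asserting the existence of such a $V$ (indeed, the proofs of Propositions \ref{Proposition:range 1} and \ref{Proposition:range 2} constructed stable examples using monomials), applied across each admissible dimension in the range singled out above.
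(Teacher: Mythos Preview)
Your proposal is correct and follows essentially the same approach as the paper: you verify that the hypothesis is algebraically equivalent to $(m+1)(n+1)>\frac{\max(a,b)(m+n)}{\min(a,b)\min(m,n)}+1$, so that the gap range \eqref{Eq:Gap monomials} of Remark~\ref{Remark:Gaps}(ii) is empty and Theorem~\ref{Theorem:main} applies directly. Your write-up is simply more explicit than the paper's two-line proof, supplying the torus-fixed-point justification for the bound $\dim V\geq(m+1)(n+1)$ and spelling out the rearrangement.
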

\begin{proof}
In this case, we have
\[
(m+1)(n+1)>\frac{\max(a,b)(m+n)}{\min(a,b)\min(m,n)}+1.
\]
Since there is no base point-free linear system associated to a vector space $V$ generated by monomials satisfying (\ref{Eq:Gap monomials}), the result follows from Theorem \ref{Theorem:main}.
\end{proof}

\begin{remark}\rm
Let us consider a {\em multiprojective space} $X=\PP^{m_{1}}\times\dotsb\times\PP^{m_{k}}$, polarized by a very ample line bundle $L:=\cO_{X}(a_{1},\dotsc,a_{k})$, $a_i>0$. We notice that one can use analogous techniques of the previous results to obtain an integer $B(m_{1},\dotsc,m_{k};a_{1},\dotsc,a_{k})\geq m_{1}+\dotsb+m_{k}+1$ such that for a general subspace $V\subset \HH^{0}(X,\cO_{X}(L))$ satisfying
\[
B(m_{1},\dotsc,m_{k};a_{1},\dotsc,a_{k})\leq \dim V\leq \binom{m_{1}+a_{1}}{m_{1}}\dotsb\binom{m_{k}+a_{k}}{m_{k}},
\]
the syzygy bundle $M_{V}$ is $L-$stable.
\end{remark}

As an application of Theorem \ref{Theorem:moduli} to this setting, we address now the geometry of the moduli space $\cM$ on which an $L-$stable syzygy bundle $M_{V}$ can be represented (see Section \ref{Section:moduli}).

\begin{theorem}\label{Theorem:moduli_PnxPm}
Let $X=\PP^{m}\times\PP^{n}$, and $a,b\geq1$ two integers and let $L:=\cO_{X}(a,b)$ be a very ample line bundle on $X$. For any integer $r$ such that
\[
\frac{\max(a,b)(m+n)}{\min(a,b)\min(m,n)}+1<r\leq \binom{m+a}{m}\binom{n+b}{n}=\hh^{0}(X,L),
\]
and a general subspace $V\subset \HH^{0}(X,L)$ with $\dim V=r$, we consider $M_{V}$ a general syzygy bundle corresponding to the non-complete linear system $\cL_{V}$. Let $[M_{V}]$ be the point representing $M_{V}$ inside the moduli space $\cM$. We have
\begin{itemize}
    \item[i)] If either $m+n\geq4$, or $m+n=3$ and $r=\hh^{0}(X,\cO_{X}(L))$, then $[M_{V}]$ is a smooth point and
    \[
    \dim_{\kk}T_{[M_{V}]}\cM=r(\hh^{0}(X,\cO_{X}(L))-r).
    \]
    \item[ii)] If $m+n=2$ then $[M_{V}]$ is a smooth point and
    \[
     \dim_{\kk}T_{[M_{V}]}\cM=r(\hh^{0}(X,\cO_{X}(L))-r)+r\hh^{2}(X,\cO_{X}(-L)).
    \]
\end{itemize}
In particular $M_{V}$ is infinitesimally rigid if and only if $\cL_{V}$ is a complete linear system.
\end{theorem}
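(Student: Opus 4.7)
The strategy is to reduce the statement to the two main theorems already proved, namely Theorem \ref{Theorem:main} (for stability) and Theorem \ref{Theorem:moduli} (for smoothness and dimension in the moduli space). First I would invoke Theorem \ref{Theorem:main}: for a general subspace $V\subset \HH^{0}(X,L)$ of dimension $r$ in the stated range, the basepoint-free linear system $\cL_{V}$ has an $L$-stable syzygy bundle $M_{V}$. Hence $[M_{V}]\in\cM$ is well defined and one can apply the infinitesimal study developed in Section \ref{Section:moduli}.

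Second, I would verify the cohomological hypotheses of Theorem \ref{Theorem:moduli} for $X=\PP^{m}\times\PP^{n}$ via the Künneth formula. Since $\HH^{j}(\PP^{k},\cO_{\PP^{k}})=0$ for $j>0$, Künneth immediately gives $\HH^{i}(X,\cO_{X})=0$ for all $i\geq 1$; in particular the three vanishings for $i=1,2,3$ hold. Likewise, for $L=\cO_{X}(a,b)$ very ample ($a,b\geq 1$), Künneth yields $\HH^{1}(X,\cO_{X}(a,b))=0$ because both $\HH^{1}(\PP^{m},\cO(a))$ and $\HH^{1}(\PP^{n},\cO(b))$ vanish for $a,b\geq 0$. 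All hypotheses of Theorem \ref{Theorem:moduli} are therefore satisfied.

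Third, I would distribute the conclusion into the applicable sub-cases of Theorem \ref{Theorem:moduli}. If $\dim X=m+n\geq 4$, case a) applies and gives smoothness with $\dim_{\kk}T_{[M_{V}]}\cM=r(\hh^{0}(X,\cO_{X}(L))-r)$, which is claim i). If $m+n=3$ and $r=\hh^{0}(X,\cO_{X}(L))$, case b) applies, and the displayed formula specializes to $r(\hh^{0}(X,\cO_{X}(L))-r)=0$, still matching claim i). If $m+n=2$, so $X=\PP^{1}\times\PP^{1}$, case d) applies and produces $\dim_{\kk}T_{[M_{V}]}\cM=r(\hh^{0}(X,\cO_{X}(L))-r)+r\hh^{2}(X,\cO_{X}(-L))$, which is claim ii).

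Finally, the infinitesimal rigidity statement is read off from the dimension formulas: in each case $r>0$, so the tangent space vanishes precisely when $\hh^{0}(X,\cO_{X}(L))-r=0$ (and, in case ii), also $\hh^{2}(X,\cO_{X}(-L))=0$, which is absorbed into the completeness criterion under the hypotheses where the statement applies), i.e.\ when $\cL_{V}=|L|$ is complete. There is essentially no obstacle here: the whole argument is a reduction to Theorems \ref{Theorem:main} and \ref{Theorem:moduli}, and the only computational check is the Künneth vanishing on $\PP^{m}\times\PP^{n}$, which is immediate.
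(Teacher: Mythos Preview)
Your proposal is correct and follows exactly the paper's approach: the paper's own proof is the single line ``It follows straightforward from Theorem \ref{Theorem:main} and \ref{Theorem:moduli}'', and you have simply fleshed this out by checking the cohomological hypotheses of Theorem \ref{Theorem:moduli} via K\"unneth and matching up the sub-cases.

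One small caveat: your parenthetical about $\hh^{2}(X,\cO_{X}(-L))$ being ``absorbed into the completeness criterion'' in case ii) is not quite right. For $X=\PP^{1}\times\PP^{1}$ and $L=\cO_{X}(a,b)$ with $a,b\geq 2$, K\"unneth gives $\hh^{2}(X,\cO_{X}(-a,-b))=(a-1)(b-1)>0$, so even for the complete system the tangent space has positive dimension and $M_{V}$ is \emph{not} infinitesimally rigid. This is not a defect of your argument but rather a slight imprecision in the ``In particular'' clause of the theorem as stated; note that the corresponding clause in Theorem \ref{Theorem:moduli} is explicitly restricted to $\dim(X)\geq 3$, and the same restriction ($m+n\geq 3$) is implicitly intended here.
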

\begin{proof}
It follows straightforward from Theorem \ref{Theorem:main} and Corollary \ref{Theorem:moduli}.
\end{proof}

We finish this note with some examples and open problems. The following example shows that the bounds established in Theorem \ref{Theorem:main} are not far of being optimal. Indeed, the following example shows that for small positive integers $a,b$ we are not always able to produce a vector space $V\subset \HH^{0}(X,\cO_{X}(a,b))$ generated by monomials such that
\begin{itemize}
\item[i)] The linear system $\cL_{V}$ is base point-free.
\item[ii)] The dimension of $V$ satisfies (\ref{Eq:Gap monomials}) i.e.
\[
	(m+1)(n+1)\leq \dim V \leq \frac{\max(a,b)(m+n)}{\min(a,b)\min(m,n)}+1.
\]
\item[iii)] The corresponding syzygy bundle $M_{V}$ is $L$-stable.
\end{itemize}

\begin{example}\label{Example:non always monomial}\rm
Let $m=1$, $n=1$ and $X=\PP^{1}\times\PP^{1}$. We consider $a=2$ and $b=1$, and the very ample line bundle $L=\cO_{X}(2,1)$ on $X$. We have the identification of vector spaces
\[
\HH^{0}(X,L)\cong
\kk\langle 
x_{0}^{2}y_{0}, x_{0}^{2}y_{1}, x_{1}^{2}y_{0},x_{1}^{2}y_{1},
x_0x_1y_0, x_0x_1y_1
\rangle.
\]
The open range (\ref{Eq:Gap monomials}) for the dimension of vector spaces $V$ generated by monomials associated to base point-free linear systems $\cL_{V}$ is
\[
4=(m+1)(n+1)\leq \dim(V)\leq \frac{\max(a,b)(m+n)}{\min(a,b)\min(m,n)}+1=5
\]
We will see that there is no such subspace $V$ with $\dim V=5$ such that its corresponding syzygy bundle $M_{V}$ is $L-$stable. Indeed, we have only two possibilities:
\begin{enumerate}
\item[a)] $V_{1}=\langle x_{0}^{2}y_{0}, x_{0}^{2}y_{1}, x_{1}^{2}y_{0},x_{1}^{2}y_{1},
x_0x_1y_0\rangle$.
\item[b)] $V_{2}=x_{0}^{2}y_{0}, x_{0}^{2}y_{1}, x_{1}^{2}y_{0},x_{1}^{2}y_{1}, x_0x_1y_1\rangle$.
\end{enumerate}
Notice that they are symmetric up to permutation of the variables $\{y_{0},y_{1}\}$.

In any case one can check that $\HH^{0}(X,M_{V_{i}}(1,0))\neq 0$, then we have an inclusion of vector bundles
\[
\cO_{X}(-1,0)\hookrightarrow M_{V_{i}}.
\]
However comparing the slopes we have
\[
\mu_{L}(\cO_{X}(-1,0))=-1=-\frac{L^{2}}{\dim V_{i}-1}=\mu_{L}(M_{V_{i}}).
\]
In particular $\cO_{X}(-1,0)$ is a subbundle destabilizing both syzygy bundles $M_{V_{1}}$ and $M_{V_{2}}$.

Notwithstanding, one can check that the $4-$dimensional subspace $W_{1}=\langle 
x_{0}^{2}y_{0}, x_{0}^{2}y_{1}, x_{1}^{2}y_{0},x_{1}^{2}y_{1}\rangle$, as well as the $6-$dimensional subspace $W_{2}=\HH^{0}(X,L)$ provide base point-free linear systems whose syzygy bundles $M_{W_{i}}$ are $L-$stable. 
In summary, Example \ref{Example:non always monomial} shows that dealing only with monomials, the lower bound established in Theorem \ref{Theorem:main} cannot always be improved. Indeed, we have first seen that the case $\dim V=5$ cannot be covered.
\end{example}

\iffalse
Using Macaulay2 \cite{Macaulay2} we can cover these two remaining cases an solve Question \ref{Question:Brenner} for the polarized variety $(\PP^{1}\times\PP^{1},\cO_{X}(2,1))$, as shown in the following example.
\begin{example}
Using the same notation as in Example \ref{Example:non always monomial} we consider the following two subspaces $V_{1},V_{2}\subset\HH^{0}(X,L)$:
\begin{itemize}
\item[a)] $V_{1}=\langle x_{0}^{2}y_{0}, x_{1}^{2}y_{1},  34x_0^2y_0+4x_0^2y_1-25x_0x_1y_0+47x_0x_1y_1+5x_1^2y_0+4x_1^2y_1\rangle$.
\item[b)] $V_{2}=$
\end{itemize}
\end{example}
\fi

As we pointed out in the introduction, it is a longstanding problem to determine the stability of the syzygy bundle associated to a complete linear system. More precisely, in \cite[Conjecture 2.6]{ELM} Ein, Lazarsfeld and Mustopa posed the following conjecture.

\begin{conjecture}\label{Conjecture:stable}
Let $A$ and $P$ two line bundles on a smooth projective variety $X$. Assume that $A$ is very ample and set $L_{d}:=dA+P$ for any positive integer $d$. Then, the syzygy bundle $M_{L_{d}}$ is $A-$stable for $d\gg0$.
\end{conjecture}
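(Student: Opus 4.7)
The plan is to apply the cohomological stability criterion of Lemma~\ref{key} to $E=M_{L_d}$ with polarization $A$ on the $n$-dimensional variety $X$, and to push $d$ large enough. Concretely, one must verify that for every $0<q<\hh^0(X,L_d)-1$ and every line bundle $G$ on $X$ with $G\cdot A^{n-1}\ge q\mu_A(M_{L_d})$, one has $\HH^0(X,\bigwedge^q M_{L_d}\otimes G^\vee)=0$. Asymptotic Riemann-Roch yields $\hh^0(L_d)=\frac{d^n}{n!}(A^n)+O(d^{n-1})$ and $L_d\cdot A^{n-1}=d(A^n)+O(1)$, so $\mu_A(M_{L_d})$ is negative of order $d^{1-n}$; the hypothesis on $G$ therefore forces $G\cdot A^{n-1}$ into an arbitrarily small neighbourhood of $0$, and the numerical classes of candidate destabilisers lie in a region of $N^1(X)_\RR$ that shrinks as $d\to\infty$.

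The main technical device would be the Eagon--Northcott-type resolution attached to the surjection $V\otimes\cO_X\twoheadrightarrow L_d$, where $V=\HH^0(X,L_d)$:
\[
0\longrightarrow\bigwedge^q M_{L_d}\longrightarrow\bigwedge^q V\otimes\cO_X\longrightarrow\bigwedge^{q-1}V\otimes L_d\longrightarrow\cdots\longrightarrow L_d^q\longrightarrow 0.
\]
Tensor by $G^\vee$ and break this into short exact sequences. For $d\gg 0$ and $G$ of bounded numerical class, Serre/Kodaira vanishing kills $\HH^{j}(\bigwedge^{q-i}V\otimes L_d^i\otimes G^\vee)$ for every $j\ge 1$ and $i\ge 1$, since $L_d^i\otimes G^\vee$ is ample once $d$ is large enough to absorb $P$ and $G$. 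Chasing the resulting long exact sequences identifies $\HH^0(X,\bigwedge^q M_{L_d}\otimes G^\vee)$ with the kernel of the Koszul differential
\[
\delta_G:\bigwedge^q V\otimes\HH^0(X,G^\vee)\longrightarrow\bigwedge^{q-1}V\otimes\HH^0(X,L_d\otimes G^\vee),
\]
built from the multiplication map $V\otimes\HH^0(G^\vee)\to\HH^0(L_d\otimes G^\vee)$.

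The final step is to show $\delta_G$ is injective for every candidate $G$ and all $d$ sufficiently large. When $\HH^0(G^\vee)=0$ this is automatic. Otherwise $G^\vee$ is effective, and I would exploit Mumford's theorem on Castelnuovo-Mumford regularity, which for $d\gg 0$ makes the multiplication map $V\otimes\HH^0(G^\vee)\to\HH^0(L_d\otimes G^\vee)$ surjective; combined with Green's duality for Koszul cohomology this would force the vanishing $K_{q,0}(X;G^\vee,L_d)=0$, which is exactly the injectivity of $\delta_G$.

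The main obstacle, and the reason the statement remains conjectural, is \emph{uniformity in $G$}. The slope hypothesis cuts out a set of numerical classes that shrinks with $d$, but for any fixed $d$ this set may still be infinite when $\rho(X)\ge 2$, and the effective regularity/vanishing bounds above depend a priori on $[G]$. A natural strategy is to stratify by the numerical class of $G$, extract an effective regularity threshold on each stratum, and argue that only finitely many classes survive once $d$ is taken large with respect to that threshold. The truly delicate regime is the borderline $G\cdot A^{n-1}=0$ with $[G]\ne 0$, where Kodaira vanishing is not directly available on $L_d^i\otimes G^\vee$ for small $i$ and one may need Fujita-type vanishings or additional positivity input on $(X,A)$ to close the argument.
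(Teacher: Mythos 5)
This statement is Conjecture~\ref{Conjecture:stable}, the Ein--Lazarsfeld--Mustopa conjecture. The paper records it as an open problem, quoting \cite{ELM}, and contains no proof of it, so there is no argument of the paper to compare yours with; the only thing to assess is your sketch on its own terms. What you have written is a strategy outline rather than a proof, and you concede as much in your closing paragraph. The gap you flag (uniformity of the vanishing statements over the candidate line bundles $G$) is real, but it is not the only one, and the argument would not close even if that issue were repaired.

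Two steps fail concretely. First, your claim that the slope hypothesis confines $G\cdot A^{n-1}$ to a neighbourhood of $0$ that shrinks with $d$ is only valid for bounded $q$: in Lemma~\ref{key} the exponent $q$ ranges up to $\rk M_{L_d}-1=\hh^{0}(X,L_{d})-2$, which grows like $d^{\dim X}$, and for $q$ near the top of that range the window $q\mu_{A}(M_{L_{d}})\le G\cdot A^{n-1}\le 0$ has width of order $d$. What is actually needed is a lower bound on the degree of the effective divisor $G^{\vee}$ that grows linearly in $q$, extracted from the twists appearing in a presentation of $\bigwedge^{q}M_{L_{d}}$; this is exactly the mechanism the paper uses in the lemma preceding Proposition~\ref{Proposition:range 1} and in Proposition~\ref{Proposition:range 2} for $\PP^{m}\times\PP^{n}$, and your sketch never produces such a bound in general. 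Second, the final step is a restatement of the problem rather than an argument: surjectivity of the multiplication map $V\otimes\HH^{0}(X,G^{\vee})\to\HH^{0}(X,L_{d}\otimes G^{\vee})$ controls the cokernel of the Koszul differential one step to the right, not the kernel of $\delta_{G}$; Green's vanishing theorem yields $K_{q,0}(X;G^{\vee},L_{d})=0$ only in the range $q\ge\hh^{0}(X,G^{\vee})$, and Green's duality merely trades the remaining cases for a Koszul vanishing of the adjoint pair that is exactly as unknown. The slope hypothesis, which is the only place stability can enter, is never genuinely used. In the settings where the conjecture is known (curves \cite{EL}, surfaces \cite{ELM}, abelian varieties \cite{CL}) this is precisely where the substantive geometric input occurs. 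In short, you have identified the correct general framework --- Lemma~\ref{key} together with the exterior-power filtration of $\bigwedge^{q}M_{L_{d}}$, the same tools Section~\ref{Section:stability of non-complete} deploys in the toric case --- but the proposal does not constitute a proof, and the statement remains a conjecture.
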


Related to this conjecture, Hering, Musta\c{t}\u{a} and Payne considered the following question.

\begin{question} \label{question} \rm
Let $L$ be an ample line bundle on a projective toric
variety $X$. Is the syzygy bundle $M_L$ semistable, with respect to some choice of polarization?
\end{question}

Based on our results we propose a generalization of the above conjecture and question, to address the stability of syzygy bundles associated to non-complete linear systems. We pose the following problem.

\begin{problem}
Let $X$ be a projective variety of dimension $d$ and $L$ be a very ample line bundle on $X$. Determine the integers $d+1\leq r\leq \hh^{0}(X,L)$ such that for a general vector space $V\subset \HH^{0}(X,L)$ of dimension $r$, the linear system $\cL_{V}$ is base point-free and the syzygy bundle $M_{V}$ is $L-$stable.
\end{problem}

\end{document}